\newtheorem{thm}{Theorem}
\newtheorem{cor}[thm]{Corollary}
\newtheorem{lem}[thm]{Lemma}
\newtheorem{defi}[thm]{Definition}
\newtheorem{exam}[thm]{Example}
\newtheorem{rem}[thm]{Remark}
\newtheorem{claim}[thm]{Claim}
\newcommand{\RR}{\mathbb{R}}
\newcommand{\Div}{\text{Div}}
\newcommand{\rk}{\text{rk}}
\newcommand{\outdeg}{\text{outdeg}}
\newcommand{\supp}{\text{supp}}
\newcommand{\PP}{\mathbb{P} }
\newcommand{\LL}{\mathcal{L}\,}
\newcommand{\OO}{\mathcal{O} }
\newcommand{\II}{\mathcal{I}\,}
\title[\resizebox{4.7in}{!}{Brill--Noether theory of curves on $\PP^1 \times \PP^1$: tropical and classical approach}]{Brill--Noether theory of curves on $\PP^1 \times \PP^1$: tropical and classical approach}
\author{Filip Cools}
\address{Filip Cools, KU Leuven, Department of Mathematics, Celestijnenlaan 200B, B-3001 Heverlee, Belgium}
\email{{\tt f.cools@kuleuven.be}}
\author{Michele D'Adderio}
\address{Michele D'Adderio, Universit\'e Libre de Bruxelles (ULB), D\'epartement de Math\'e\-matique, Boulevard du Triomphe, B-1050 Bruxelles, Belgium}
\email{{\tt mdadderi@ulb.ac.be}}
\author{David Jensen}
\address{Dave Jensen, University of Kentucky, Department of Mathematics, 719 Patterson Office Tower Lexington, KY 40506-0027, USA}
\email{{\tt dave.h.jensen@gmail.com}}
\author{Marta Panizzut}
\address{Marta Panizzut, TU Berlin, Institut f\"ur Mathematik, Stra{\ss}e des 17.Juni 136, 10623 Berlin, Germany}
\email{{\tt panizzut@math.tu-berlin@de}}
\thanks{Marta Panizzut is supported by the Einstein Foundation Berlin.}
\begin{document}
\begin{abstract} The gonality sequence $(d_r)_{r\geq1}$ of a smooth algebraic curve comprises the minimal degrees $d_r$ of linear systems of rank $r$. We explain two approaches to compute the gonality sequence of smooth curves in $\mathbb{P}^1 \times \mathbb{P}^1$: a tropical and a classical approach. The tropical approach uses the recently developed Brill--Noether theory on tropical curves and Baker's specialization of linear systems from curves to metric graphs \cite{Bak}. The classical one extends the work \cite{Har} of Hartshorne on plane curves to curves on $\mathbb{P}^1 \times \mathbb{P}^1$.
\end{abstract}

\maketitle

\section{Introduction}

Let $C$ be a smooth irreducible projective curve of genus $g\geq 4$ over an algebraically closed field $k$ of characteristic zero.
We will denote a (complete) linear system $|D|$ of divisors on $C$ with rank $r=\rk_C(D)=H^0(C,\mathcal{O}_C(D))-1$ and degree $d=\deg(D)$ with
$g_d^r$. The \emph{gonality sequence} $(d_r(C))_{r\geq 1}$ of $C$ was introduced in \cite{LM} and is defined as follows:
$$d_r(C)=\min\{d\in\mathbb{Z}\,|\,C\ \text{admits a linear system}\ g_d^r\}.$$
%$$d_r(C)=\min\{d\in\mathbb{Z}\,|\,\exists D\in\text{Div}(C): \deg(D)=d\ \text{and}\ \rk_C(D)\geq r\}.$$
Alternatively, $d_r(C)$ equals the smallest \emph{degree} of a non-degenerate rational map $f:C\to \mathbb{P}^r$.
Hereby, the degree of $f$ is defined as the degree of $f$ onto its image times the degree of the image curve $\overline{f(C)}\subset \mathbb{P}^r$. The first entry $d_1(C)$ of the gonality sequence is called the \emph{gonality} of $C$.
Because of the Riemann-Roch Theorem, we have that $d_r(C)=g+r$ if $r\geq g$. Hence, the entries $d_r(C)$ of interest are the ones with index $0<r<g$, which correspond to special divisors $D$ on $C$.

The whole gonality sequence is determined for general, hyperelliptic, trigonal, bielliptic, general tetragonal and general pentagonal curves (see \cite{LM,LN}). It is also known for smooth plane curves \cite{Cil,Har}.

\begin{thm} \label{thm_planecurves}
Let $C\subset \mathbb{P}^2$ be a smooth plane curve of degree $d$. Let $r$ be an integer satisfying $0<r<g:=\frac{(d-1)(d-2)}{2}$.
Then $$d_r(C)=kd-h,$$ where $k$ and $h$ are the uniquely determined integers with $1\leq k \leq d-3$ and $0\leq h\leq k$ such that
$r=\frac{k(k+3)}{2}-h$.
\end{thm}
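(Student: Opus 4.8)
The plan is to determine the gonality sequence of a smooth plane curve $C \subset \PP^2$ of degree $d$ by analyzing which special linear systems $g_d^r$ can occur. My strategy rests on the interplay between linear systems on $C$ and the very ample system cut out by lines in $\PP^2$, together with the fundamental fact that a smooth plane curve of degree $d \geq 4$ is \emph{not} hyperelliptic and that its canonical embedding is governed by adjunction: $K_C = \OO_C(d-3)$, the restriction of $\OO_{\PP^2}(d-3)$.

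\smallskip

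\noindent\emph{Classification of special systems.} First I would establish that every special divisor $D$ of small degree on $C$ is \emph{cut out by curves in the plane}; more precisely, I expect that the special linear systems computing the $d_r(C)$ are all of the form $|kH - P_1 - \cdots - P_h|$, where $H$ is the hyperplane (line) class and the $P_i$ are points of $C$. The key input here is a Castelnuovo-type or Noether-type result saying that for a divisor $D$ on a smooth plane curve, if $\deg D$ is small enough then $h^0(C, D)$ is computed by the restriction of plane curves, i.e.\ the points in the support of $D$ impose independent conditions in a controlled way. This lets me parametrize candidate linear systems by the pair $(k,h)$: take all curves of degree $k$ in $\PP^2$ and subtract $h$ general (or suitably chosen) base points.

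\smallskip

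\noindent\emph{Rank and degree bookkeeping.} Second, for the system $|kH - P_1 - \cdots - P_h|$ I would compute its degree and rank. The degree is $kd - h$. For the rank, the space of plane curves of degree $k$ has dimension $\binom{k+2}{2} = \frac{(k+1)(k+2)}{2}$, so (for $k \leq d-3$, which keeps us in the special range by adjunction) the complete linear system $|kH|$ on $C$ has projective dimension $\frac{k(k+3)}{2}$; imposing $h$ general base points with $0 \leq h \leq k$ drops the rank by exactly $h$, yielding rank $r = \frac{k(k+3)}{2} - h$. This matches the claimed parametrization and shows $d_r(C) \leq kd - h$ for the stated $(k,h)$.

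\smallskip

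\noindent\emph{Optimality and uniqueness.} The last and hardest part is the lower bound: showing that no linear system of rank $r$ has degree strictly smaller than $kd-h$, and that the representation of $r$ as $\frac{k(k+3)}{2}-h$ with $1 \leq k \leq d-3$, $0 \leq h \leq k$ is unique. Uniqueness of $(k,h)$ is an elementary but careful counting argument: as $k$ ranges over $1,\dots,d-3$ the intervals of values $\frac{k(k+3)}{2}-h$ for $0\le h\le k$ tile the integers $0<r<g$ without overlap, which I would verify by comparing consecutive endpoints. The genuine obstacle is the lower bound on the degree, for which I would invoke a Castelnuovo bound on the dimension of the space of sections of a line bundle of given degree on $C$, showing that any $g_d^r$ of smaller degree would force too many sections relative to what a plane curve of degree $d$ permits --- equivalently, that the special systems must factor through the plane embedding. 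I expect this optimality step to require the full strength of the plane-curve geometry (Noether's theorem on the projective normality of $C$, or an explicit Castelnuovo-genus argument), and it is where the bulk of the technical work will lie.
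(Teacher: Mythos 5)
Your upper-bound bookkeeping (degree $kd-h$, rank $\tfrac{k(k+3)}{2}-h$ for $|kH-P_1-\cdots-P_h|$ with $k\leq d-3$, using $K_C=\OO_C(d-3)$ and projective normality of the hypersurface $C$) and your uniqueness-of-$(k,h)$ counting are both fine and routine. The genuine gap is that the lower bound --- the entire content of the theorem --- is never proved, only named, and the tool you name is known not to suffice as stated. Your ``key input,'' that special linear systems of minimal degree must be cut out by plane curves minus base points, \emph{is} Noether's original claim, and Noether's proof of it was incomplete; the paper cites \cite{Har} precisely because Hartshorne's article exists to repair that proof. A Castelnuovo-type bound on $h^0$ in terms of degree cannot by itself yield the sharp values $kd-h$: generic curves of the same genus satisfy only the much weaker Brill--Noether bound, so any correct argument must exploit the plane embedding structurally, not just numerically, and saying the systems ``must factor through the plane embedding'' is the conclusion, not an available hypothesis --- as written, your plan is circular at this point.

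The obstruction your proposal does not confront is the following. Given a special divisor $D$ on $C$, adjunction places $D$ on a curve $C'$ of degree $d-3$, and one wants to induct by comparing ranks on $C$ and on a curve of smaller degree; but $C'$ may be reducible or singular, and no genericity assumption removes this. Hartshorne's proof (sketched at the start of Section 5 of the paper and adapted there to $\PP^1\times\PP^1$) handles this by induction on $d$: if $D$ is non-special, Riemann--Roch gives the bound; if special, one extracts the fixed component $F$ of the adjoint system $|\II_D(d-3)|$, applies Bertini to place the residual scheme $D''$ on an \emph{irreducible} but possibly singular curve $C'$ of lower degree, proves an exact-sequence lemma expressing $h^0\big(\LL_C(D')\big)$ in terms of $h^0\big(\LL_{C'}(D''+\text{twist})\big)$ (the plane-curve analogue of Lemma \ref{Lem:InductiveStep}), and then invokes the inductive hypothesis --- which therefore must be formulated for \emph{generalized divisors on Gorenstein curves}, since $C'$ need not be smooth and ordinary divisor theory breaks down there (cf.\ the nodal example in Section 5, where $H\cap C - 2P$ fails to be a generalized divisor). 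Your proposal, lacking both the fixed-component/Bertini reduction and any divisor theory on singular curves, would stall at exactly the step where Noether's argument did.
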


In particular, the linear systems on $C\subset\mathbb{P}^2$ with minimal degree for a certain fixed rank are fully classified: they are the ones which naturally come from the plane embedding of $C$, i.e. they are cut out by plane curves of some other fixed degree, minus some assigned base points.
%In other words, the divisors of smallest degree for a given rank are simply restrictions of line bundles from the ambient $\PP^2$, minus base points. The purpose of this short note is to adapt Hartshorne's argument to curves on $\PP^1 \times \PP^1$.
\vspace{\baselineskip}

In this article, we compute the gonality sequence of smooth curves $C$ on $\PP^1 \times \PP^1$, extending the result on plane curves. In order to state the main result, we introduce some notations: for $m,n\in\mathbb{Z}_{>0}$ and $0<r<g$ with $g=(m-1)(n-1)$, let $I_r$ be the set of triples
$(a,b,h)\in\mathbb{Z}^3$ satisfying
$$0\leq a\leq m-1\ ,\ 0\leq b\leq n-1\ ,\ h\geq 0 \text{ and } r=(a+1)(b+1)-1-h,$$ and define
$$\delta_r(m,n)=\min\{a n+b m-h\,|\,(a,b,h)\in I_r\}.$$

\begin{thm} \label{thm_main}
Let $C$ be a smooth curve of bidegree $(m,n)$ on $\mathbb{P}^1\times \mathbb{P}^1$. Then $d_r(C)=\delta_r(m,n)$ for all $0<r<g(C)=(m-1)(n-1)$.
%If $0<r<g(C)=(m-1)(n-1)$, then
%$$d_r(C)=\min_{(a,b,h)\in I_r} a n+b m-h,$$ where
%$I_r\subset \mathbb{Z}^3$ is the set of triples $(a,b,h)$ satisfying
%$$0\leq a\leq m-1\ ,\ 0\leq b\leq n-1\ ,\ h\geq 0 \text{ and } r=(a+1)(b+1)-1-h.$$
\end{thm}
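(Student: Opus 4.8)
The plan is to prove the two inequalities $d_r(C)\le\delta_r(m,n)$ and $d_r(C)\ge\delta_r(m,n)$ separately, exactly as in the plane-curve case of Theorem~\ref{thm_planecurves}. The upper bound is the constructive, geometric half. A curve of bidegree $(a,b)$ on $\PP^1\times\PP^1$ meets $C$ in $an+bm$ points, and since $h^0(\PP^1\times\PP^1,\mathcal{O}(a,b))=(a+1)(b+1)$, for $0\le a\le m-1$ and $0\le b\le n-1$ the restriction map $H^0(\mathcal{O}(a,b))\to H^0(\mathcal{O}_C(a,b))$ is injective, its kernel being $H^0(\mathcal{O}(a-m,b-n))=0$. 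Hence $|\mathcal{O}_C(a,b)|$ is a $g_{an+bm}^{s}$ with $s\ge(a+1)(b+1)-1$. Removing $h$ general base points yields a linear system of rank $(a+1)(b+1)-1-h$ and degree $an+bm-h$, so $d_r(C)\le an+bm-h$ whenever $r=(a+1)(b+1)-1-h$. Minimizing over $(a,b,h)\in I_r$ gives $d_r(C)\le\delta_r(m,n)$.

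The reverse inequality $d_r(C)\ge\delta_r(m,n)$ is the hard half, and I would obtain it by tropicalization. Choose a curve of bidegree $(m,n)$ over a complete valued field with a toric semistable degeneration whose skeleton $\Gamma$ is the metric graph dual to a regular unimodular triangulation of the rectangle $\square=\mathrm{conv}\{(0,0),(m,0),(0,n),(m,n)\}$; then $b_1(\Gamma)=\#(\mathrm{int}\,\square\cap\ZZ^2)=(m-1)(n-1)=g$. By Baker's specialization lemma the specialization map does not decrease rank, so a $g_d^r$ on $C$ specializes to a degree-$d$ divisor on $\Gamma$ of rank at least $r$; thus $d_r(\Gamma)\le d_r(C)$. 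It therefore suffices to prove the purely combinatorial lower bound $d_r(\Gamma)\ge\delta_r(m,n)$, after which all of the inequalities collapse to equalities.

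The combinatorial heart — and the step I expect to be the main obstacle — is showing that every divisor $D$ on $\Gamma$ with $\rk_\Gamma(D)\ge r$ has degree at least $\delta_r(m,n)$. I would attack this with the theory of $v_0$-reduced divisors and Dhar's burning algorithm, following the strategy of the tropical proof of the Brill–Noether theorem for the triangular grid, but adapted to the rectangular grid $\Gamma$. Concretely, one associates to a reduced representative of $D$ a lattice configuration inside $\square$ and argues that positivity of the rank forces $D$ to dominate an $(a+1)\times(b+1)$ block of lattice points, for some $0\le a\le m-1$ and $0\le b\le n-1$; the degree of the dominating part is then bounded below by $an+bm$, while the defect at the $h$ places where the rank condition has slack accounts for the $-h$ term. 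Making the correspondence between positive-rank divisors and lattice blocks precise, and checking that the minimum of $an+bm-h$ over the admissible blocks is exactly $\delta_r(m,n)$, is the delicate part; in particular one must verify that no divisor can spread across the grid so as to achieve rank $r$ more cheaply than these block configurations.

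As an alternative to the tropical lower bound, one can extend Hartshorne's classical argument \cite{Har} for plane curves. The idea is to show directly that a special linear system of minimal degree and rank $r$ on $C$ is cut out, up to base points, by forms of some bidegree $(a,b)$ restricted from $\PP^1\times\PP^1$. Using the restriction sequence for $\mathcal{O}(a,b)$ together with Clifford-type bounds and the bigraded structure of the homogeneous coordinate ring of $\PP^1\times\PP^1$, one constrains the possible $(a,b)$ and recovers the same optimization problem defining $\delta_r(m,n)$. I expect this route to require the most care in ruling out sporadic special systems not of the expected form, mirroring the subtlety of the tropical block analysis.
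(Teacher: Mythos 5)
Your upper bound is correct and coincides with the paper's (Lemma \ref{lem_upperbound}): restrict $\OO(a,b)$ to $C$, note the kernel of the restriction map vanishes for $a\leq m-1$, $b\leq n-1$, and subtract $h$ base points. The genuine gap is in your lower bound. The theorem is stated for an \emph{arbitrary} smooth curve $C$ of bidegree $(m,n)$ over $k$, and you cannot ``choose a curve \dots with a toric semistable degeneration'' --- the curve is given, over an algebraically closed field of characteristic zero, not over a valued field, and there is no reason it embeds as the generic fiber of a family with your prescribed skeleton. What the degeneration argument actually yields is $d_r(\Gamma)\leq d_r(C_{k((t))})$ for the generic fiber of a family you construct, which combined with semicontinuity of the gonality sequence \cite[Proposition 3.4]{LM} gives the lower bound only for a \emph{generic} curve of bidegree $(m,n)$. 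Semicontinuity runs the wrong way for everyone else: a special smooth curve could a priori carry a $g^r_d$ of strictly smaller degree than the generic one, and nothing in your argument rules this out. This is precisely why the paper stops at Corollary \ref{cor_generic} with the tropical method and devotes Section 5 to a separate classical argument (Theorem \ref{thm:mainthm}) that covers every irreducible, not just generic, curve. Your ``inequalities collapse to equalities'' conclusion therefore proves a weaker statement than Theorem \ref{thm_main}.

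Two further points. First, even for the generic case, your combinatorial heart is a plan, not a proof, and it targets a different graph than the paper's: the paper degenerates to a union of $m+n$ lines, whose dual graph is $K_{m,n}$ with unit edge lengths, and proves $d_r(K_{m,n})=\delta_r(m,n)$ (Theorem \ref{thm_Kmn}) via a characterization of reduced divisors, Dhar's algorithm, and a worst-case-sequence analysis; your grid graph dual to a unimodular triangulation of the rectangle would require its own Brill--Noether analysis, and the ``block domination'' assertion is exactly the hard statement, which you acknowledge but do not establish. Second, your fallback sketch of the Hartshorne route names plausible ingredients (``Clifford-type bounds,'' the bigraded coordinate ring) but omits the actual mechanism of \cite{Har} as adapted in the paper: induction on the bidegree via the fixed component $F$ of the linear system $\vert \II_D(m-2,n-2)\vert$ of adjoint curves through $D$, the inductive identity of Lemma \ref{Lem:InductiveStep} relating $h^0$ on $C$ to $h^0$ on an irreducible curve $C'$ of bidegree $(m-1,n)$ or $(m-1,n-1)$, and --- essentially --- Hartshorne's generalized divisors, which are needed because $C'$ may be singular. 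That last subtlety is what invalidated Noether's original plane-curve argument, and it is not addressed by your sketch.
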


\begin{rem}
In \cite{LM}, given a smooth curve with gonality sequence $(d_r)_{r}$, the authors investigate whether the \emph{slope inequality} $\frac{d_r}{r}\geq \frac{d_{r+1}}{r+1}$ is satisfied. Although for most curves this inequality is valid everywhere, there are counterexamples of curves where the inequality is violated for some rank values. The latter is also the case for smooth curves on $\mathbb{P}^1\times \mathbb{P}^1$. For example, if we fix the bidegree $(m,n)=(7,5)$, then the slope inequality is violated at $r=5$ (since $d_5=17$ and $d_6=21$) and at $r=11$ (since $d_{11}=29$ and $d_{12}=32$).
\end{rem}

We will present two ways to attack the problem: a combinatorial and an algebro-geometric way. The tropical approach relies on the theory of linear systems on metric graphs/tropical curves, which has been introduced in \cite{Bak,GK,MZ}. Using this theory, we can also introduce the gonality sequence $(d_r(\Gamma))_{r\geq 1}$ for metric graphs $\Gamma$:
\[d_r(\Gamma)=\min\{d\in\mathbb{Z}\,|\,\exists D\in\text{Div}(\Gamma): \deg(D)=d\ \text{and}\ \rk_{\Gamma}(D)\geq r\}.\]
If the metric graph $\Gamma$ has genus $g$, we again have that $d_r(\Gamma)=g+r$ for $r\geq g$, because of the Riemann--Roch Theorem for metric graphs (see \cite{BaNo,GK,MZ}). Here, we will focus on the metric complete bipartite graph $K_{m,n}$ with all edge lengths equal to one.

\begin{thm} \label{thm_Kmn}
For $0<r<g(K_{m,n})=(m-1)(n-1)$, we have that
$$d_r(K_{m,n})=\delta_r(m,n).$$
\end{thm}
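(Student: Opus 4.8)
The plan is to prove the identity by establishing the two inequalities $d_r(K_{m,n})\le\delta_r(m,n)$ and $d_r(K_{m,n})\ge\delta_r(m,n)$ separately, working entirely with divisors, linear equivalence (chip-firing) and Dhar's burning algorithm on the unit-length graph $K_{m,n}$. Throughout, write $U=\{u_1,\dots,u_m\}$ and $V=\{v_1,\dots,v_n\}$ for the two sides, so that firing $u_i$ yields the relation $n\,u_i\sim\sum_{j}v_j$ and firing $v_j$ yields $m\,v_j\sim\sum_i u_i$. These two relations are the combinatorial shadow of the two rulings of $\PP^1\times\PP^1$ and will drive the explicit constructions.

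\textbf{Upper bound.} For a triple $(a,b,h)\in I_r$ I would introduce the divisor
\[
D_{a,b}=b\sum_{i=1}^m u_i+a\sum_{j=1}^n v_j,
\]
of degree $an+bm$, playing the role of the restriction of $\OO(a,b)$. The key claim is $\rk_{K_{m,n}}(D_{a,b})\ge (a+1)(b+1)-1$. I would prove this by showing that for every effective $E$ with $\deg E=(a+1)(b+1)-1$ the system $|D_{a,b}-E|$ is nonempty: reduce $D_{a,b}-E$ with respect to a vertex on the heavier side and check, via Dhar's burning algorithm, that the reduced representative stays effective. The burning dynamics on $K_{m,n}$ depend only on the numbers of already-burnt vertices on each side, so this reduces to an elementary threshold computation using that $D_{a,b}$ carries $b$ (resp. $a$) chips uniformly on $U$ (resp. $V$). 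Granting the claim, subtracting any $h$ points from $D_{a,b}$ produces a divisor of degree $an+bm-h$ and rank $\ge (a+1)(b+1)-1-h=r$, since removing one point lowers the rank by at most one; minimizing over $(a,b,h)\in I_r$ gives $d_r(K_{m,n})\le\delta_r(m,n)$.

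\textbf{Lower bound.} Here I would start from a divisor $D$ with $\rk_{K_{m,n}}(D)\ge r$ and aim to produce a triple $(a,b,h)\in I_r$ with $\deg D\ge an+bm-h$. The starting point is that $\rk(D)\ge r$ forces $|D-E|\ne\emptyset$ for every effective $E$ of degree $r$; choosing $E$ supported on a prescribed number of $U$- and $V$-vertices and passing to the appropriate reduced representative converts each such nonvanishing into a numerical inequality on the coefficients of $D$. The shape of the support of the optimal test divisor $E$ is exactly what the parameters $(a,b)$ record, while the slack coming from chips that cannot be forced off is what $h$ records; assembling these inequalities and optimizing the test divisor should reproduce $\deg D\ge\delta_r(m,n)$. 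To keep the case analysis finite I would invoke the Riemann--Roch involution $D\mapsto K-D$ with $K=(n-2)\sum_i u_i+(m-2)\sum_j v_j$, which matches both $\rk(D)$ and the dual value of $\delta$, and thereby lets me treat only $r$ in the lower half of its range.

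\textbf{Main obstacle.} The genuinely hard step is the lower bound: classifying the $q$-reduced divisors on $K_{m,n}$ precisely enough (reducedness amounts to a Gale--Ryser-type inequality on the sorted coefficient sequences) and then selecting, for a given $D$, the test divisor $E$ whose support shape yields the \emph{sharp} triple in $I_r$. Proving optimality of that choice---that no divisor of smaller degree can survive every degree-$r$ test---is where the combinatorial core of the theorem lies, and I expect it to demand the most care. The upper bound, by contrast, is essentially the explicit construction above together with a routine burning computation.
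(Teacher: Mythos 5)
Your upper bound is essentially the paper's: the divisor $b\sum_i u_i + a\sum_j v_j$ is exactly the one used there, where its rank is shown to be exactly $(a+1)(b+1)-1$ by running the D'Adderio--Le Borgne rank algorithm on the \emph{discrete} graph $K_{m,n}$ and transferring the answer to the metric graph via the rank-equality theorem of Hladk\'y--Kr\'al--Norine. Your proposed ``routine burning computation'' would instead have to handle test divisors $E$ supported at interior points of edges of the metric graph, which is precisely the complication that the detour through the discrete graph avoids; still, this half is fine in outline.

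The lower bound, however, is a plan rather than a proof, and what is missing is the actual content of the theorem --- as you yourself concede. Concretely, three things have to be supplied. First, a workable criterion for $v_m$-reducedness on the metric graph: the paper proves a new characterization (Theorem \ref{reduced}) in terms of vertex orderings $\prec$ with $\tilde{D}(v)<\tilde{d}(v)$, and then normalizes the ordering so that $\tilde{D}(v_i)=\tilde{d}(v_i)-1$ on one side; your appeal to ``a Gale--Ryser-type inequality'' gestures at this but does not produce it. Second, the mechanism that forces the contradiction: writing $D(v_m)=An+B$, firing the sink $A$ or $A+1$ times, and constructing a witness $E_1$ or $E_2$ of degree at most $r$ such that $D_i-E_i$ is again $v_m$-reduced with negative coefficient at $v_m$. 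Note that these witnesses must be allowed to contain interior points of edges lying in $\supp(D)$, since a reduced divisor can carry one chip on each open edge; your restriction to test divisors ``supported on a prescribed number of $U$- and $V$-vertices'' is therefore already too narrow to certify $\rk(D)<r$. Third, and most importantly, the quantitative core, which in the paper is Claim \ref{claim}: the integer sequence $\alpha_i=\tilde{D}(w_i)+A-(\tilde{d}(w_i)-2)$ satisfies $\min\{\sum_i\alpha_i^{+},\,B+1+\sum_i(\alpha_i-1)^{+}\}\leq r$, proved by replacing $\alpha$ with an extremal worst-case sequence $\beta^{(p,q)}$ and a four-case analysis in which every contradiction is extracted from the extremality of the optimal pair $(a,b)$, i.e.\ from the maximality of $(m-a-1)(n-b-1)$ recorded in Remark \ref{condition}. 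Nothing in your outline locates this inequality or the role of that maximality, and it is exactly where the minimizing structure of $\delta_r(m,n)$ enters; ``optimizing the test divisor'' is the whole theorem, not a step. Finally, the Riemann--Roch involution you invoke to halve the case analysis does not obviously help: the residual of $aD_1+bD_2-E$ has the effective part \emph{added} rather than subtracted, so the symmetry you need for $\delta$ (that $\delta_{r'}(m,n)=2g-2-\delta_r(m,n)$ for the residual rank $r'$) is itself an unproved assertion, and the paper's argument never uses or needs it.
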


Using a degeneration of bidegree-$(m,n)$ curves to a union of $m+n$ lines and Baker's Specialization Lemma \cite[Lemma 2.8]{Bak}, we prove Theorem \ref{thm_main} for \emph{generic} bidegree-$(m,n)$ curves (see Corollary \ref{cor_generic} for the details).

To settle Theorem \ref{thm_main} for \emph{arbitrary} smooth bidegree-$(m,n)$ curves, we make use of the notion of \emph{generalized divisors} on Gorenstein curves, which has been developed in \cite{Har} by Hartshorne to fix Noether's incomplete proof \cite{Noe} of Theorem \ref{thm_planecurves}. We adapt Hartshorne's argument to curves on $\mathbb{P}^1\times\mathbb{P}^1$.
\vspace{\baselineskip}

The setup of the paper is as follows. In Section 2, we show that the minimum formula $\delta_r(m,n)$ is an upper bound for the entry $d_r(C)$ of the gonality sequence of a bidegree-$(m,n)$ curve $C$. In Section 3, we provide a characterization for reduced divisors on metric graphs, which we believe to be of interest on its own. Theorem \ref{thm_Kmn} is proven in Section 4.
Finally, in Section 5, we establish Theorem \ref{thm_main} using Hartshorne's methods.  This final section can be read independently of the previous sections.

%\begin{thm} \label{thm_Kmn}
%If $0<r<g$ with $g:=(m-1)(n-1)$, then
%$$d_r(K_{m,n})=\min_{(a,b,h)\in I_r} a n+b m-h,$$ where
%$I_r\subset \mathbb{Z}^3$ is the set of triples $(a,b,h)$ satisfying
%$$0\leq a\leq m-1\ ,\ 0\leq b\leq n-1\ ,\ h\geq 0 \text{ and } r=(a+1)(b+1)-1-h.$$
%\end{thm}

\section{The upper bound for smooth curves on $\PP^1 \times \PP^1$}

Let $k$ be an algebraically closed field of characteristic zero and denote $S=\mathbb{P}^1\times \mathbb{P}^1$. We recall that $S$ has two line rulings:
$$R_1=\{\{P\}\times \mathbb{P}^1\,|\,P\in \mathbb{P}^1\} \ \text{and}\ R_2=\{\mathbb{P}^1\times \{Q\}\,|\,Q\in \mathbb{P}^1\}.$$
If $L_1\in R_1$ and $L_2\in R_2$, then $\text{Pic}(S)=\mathbb{Z}[L_1]+\mathbb{Z}[L_2]$, with intersection numbers given by $L_1\cdot L_1=L_2\cdot L_2=0$ and $L_1\cdot L_2=1$. The anti-canonical class of $S$ is equal to $2[L_1]+2[L_2]$.

%\begin{defi}
%The \emph{gonality sequence} $(d_r(C))_{r\geq 1}$ of a smooth projective curve $C$ is defined as
%$$d_r(C)=\min\{d\in\mathbb{Z}\,|\,\exists D\in\text{Div}(C): \deg(D)=d\ \text{and}\ \rk_C(D)\geq r\}.$$
%Note that $d_r(C)=g+r$ if $r\geq g$, because of the Riemann-Roch Theorem.
%\end{defi}
Consider a smooth curve $C\subset S$ of bidegree $(m,n)$, i.e. $$[C]=m[L_1]+n[L_2].$$ Note that $C$ has genus $g(C)=(m-1)(n-1)$.
For $i\in\{1,2\}$, write $D_i$ to denote the divisor on $C$ cut out by $L_i$. Hence, we have that $\deg(D_1)=n$, $\deg(D_2)=m$ and $\rk_C(D_1)=\rk_C(D_2)=1$. In fact, the gonality $d_1(C)$ of $C$ is equal to $\min\{m,n\}$ (see e.g. \cite[Corollary 6.2]{CaCo}).

\begin{rem} \label{rem_phi}
Consider the map $$\varphi:k^2\to \mathbb{P}^3:(x,y)\to (1:x:y:xy).$$ Then $S=\overline{\text{im}(\varphi)}$, the ruling $R_1$ consists of the lines on $S$ with fixed $x$-value and the ruling $R_2$ of lines with fixed $y$-value. The curve $C$ corresponds to (the zero set of) a bivariate polynomial $f$ of the form $\sum_{i=0}^m\sum_{j=0}^n a_{i,j} x^i y^j$, where the coefficients $a_{i,j}\in k$.
%There exists a canonical divisor $K_C$ on $C$ for which the vector space $H^0(C,K_C)$ has basis $$\{x^i y^j\,|\,i=0,\ldots,m-2; j=0,\ldots,n-2\}.$$ Here, $x$ and $y$ are viewed as functions on $C$ through $\varphi$.
\end{rem}

\begin{lem}  \label{lem_divisors}
If $a,b\in\mathbb{Z}$ with $0\leq a\leq m-1$ and $0\leq b\leq n-1$, then $$\rk_C(a D_1+b D_2)=(a+1)(b+1)-1.$$
\end{lem}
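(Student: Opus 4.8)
The plan is to realize $aD_1+bD_2$ as the restriction to $C$ of the line bundle $\mathcal{O}_S(aL_1+bL_2)$ on $S=\PP^1\times\PP^1$, and to read off its space of sections by comparing cohomology on $S$ and on $C$. Since $D_i$ is the intersection of $C$ with a member of the ruling $R_i$, we have $\mathcal{O}_C(aD_1+bD_2)=\mathcal{O}_S(aL_1+bL_2)|_C$, and the global sections of $\mathcal{O}_S(aL_1+bL_2)$ are exactly the bidegree-$(a,b)$ polynomials $\sum_{i=0}^a\sum_{j=0}^b c_{ij}x^iy^j$ of Remark \ref{rem_phi}, which restrict to $C$. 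As $\dim H^0(S,\mathcal{O}_S(aL_1+bL_2))=(a+1)(b+1)$ (the product of the section counts on the two $\PP^1$-factors), and this is precisely the claimed value of $\rk_C(aD_1+bD_2)+1$, it suffices to prove that the restriction map on global sections is an isomorphism.

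The main tool is the short exact sequence obtained by multiplying by the bidegree-$(m,n)$ equation $f$ cutting out $C$:
$$0\to\mathcal{O}_S((a-m)L_1+(b-n)L_2)\to\mathcal{O}_S(aL_1+bL_2)\to\mathcal{O}_C(aD_1+bD_2)\to 0.$$
Passing to the long exact cohomology sequence, the restriction map $H^0(S,\mathcal{O}_S(aL_1+bL_2))\to H^0(C,\mathcal{O}_C(aD_1+bD_2))$ will be an isomorphism as soon as both $H^0$ and $H^1$ of the kernel bundle $\mathcal{O}_S((a-m)L_1+(b-n)L_2)$ vanish. The hypotheses $0\le a\le m-1$ and $0\le b\le n-1$ force $a-m\le -1$ and $b-n\le -1$, so the kernel has strictly negative bidegree in each factor.

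The injectivity is transparent even without cohomology: a nonzero bidegree-$(a,b)$ polynomial vanishing on $C$ would be divisible by $f$ and hence have degree at least $m$ in the first variable, contradicting $a<m$; equivalently $H^0(S,\mathcal{O}_S((a-m)L_1+(b-n)L_2))=0$. For the surjectivity I would invoke the K\"unneth decomposition $H^1(S,\mathcal{O}_S(pL_1+qL_2))=\bigl(H^0(\PP^1,\mathcal{O}(p))\otimes H^1(\PP^1,\mathcal{O}(q))\bigr)\oplus\bigl(H^1(\PP^1,\mathcal{O}(p))\otimes H^0(\PP^1,\mathcal{O}(q))\bigr)$ with $p=a-m<0$ and $q=b-n<0$; since $H^0(\PP^1,\mathcal{O}(p))=H^0(\PP^1,\mathcal{O}(q))=0$, both summands vanish. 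Thus the restriction map is an isomorphism, giving $h^0(C,\mathcal{O}_C(aD_1+bD_2))=(a+1)(b+1)$ and hence $\rk_C(aD_1+bD_2)=(a+1)(b+1)-1$.

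The only step carrying genuine content is the surjectivity, namely the vanishing $H^1(S,\mathcal{O}_S((a-m)L_1+(b-n)L_2))=0$, and this is exactly where the two-sided bound $a\le m-1$, $b\le n-1$ is used: dropping either inequality would reintroduce sections of the kernel (breaking injectivity) or $H^1$-classes (breaking surjectivity), and the clean formula would fail. Everything else is a routine cohomology computation on $\PP^1\times\PP^1$.
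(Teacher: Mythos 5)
Your proof is correct and takes essentially the same route as the paper: both rest on the short exact sequence $0\to\mathcal{O}_S(-(m-a)L_1-(n-b)L_2)\to\mathcal{O}_S(aL_1+bL_2)\to\mathcal{O}_C(aD_1+bD_2)\to 0$ and reduce the lemma to the vanishing of $H^0$ and $H^1$ of the kernel, the only difference being that you verify these vanishings by hand (divisibility by $f$ for injectivity, the K\"unneth decomposition on $\PP^1\times\PP^1$ for $H^1$) where the paper cites \cite[Prop.4.3.3]{CLS} and the Batyrev--Borisov vanishing theorem. One tiny caveat on your closing aside: dropping an inequality does not always break the formula (e.g.\ for $(a,b)=(m,n-1)$ the kernel is $\mathcal{O}_S(0,-1)$, whose $H^0$ and $H^1$ both still vanish), but this remark lies outside the proof of the lemma as stated and does not affect its correctness.
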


\begin{proof}
Consider the exact sequence of sheaves of $\mathcal{O}_S$-modules $$0\to \mathcal{O}_S(-C)\to \mathcal{O}_S\to \mathcal{O}_C\to 0.$$
Since $C\sim m L_1+n L_2$, we also have $$0\to \mathcal{O}_S(-m L_1-n L_2)\to \mathcal{O}_S\to \mathcal{O}_C\to 0.$$
We tensor the former exact sequence with $\mathcal{O}_S(a L_1+b L_2)$ to obtain
$$0\to \mathcal{O}_S(-(m-a) L_1-(n-b) L_2)\to \mathcal{O}_S(a L_1+b L_2)\to \mathcal{O}_C(a D_1+b D_2)\to 0.$$
Taking cohomology gives us the long exact sequence
\begin{multline*}
0\to H^0(S,-(m-a) L_1-(n-b) L_2)\to H^0(S,a L_1+b L_2) \\ \to H^0(C,a D_1+b D_2)\to H^1(S,-(m-a) L_1-(n-b) L_2) \to \cdots ,$$
\end{multline*}
where we have abbreviated cohomology spaces $H^i(X,\mathcal{O}_X(D))$ by $H^i(X,D)$.
Using \cite[Prop.4.3.3]{CLS}, we have that $$h^0(S,-(m-a) L_1-(n-b) L_2)=0\ \text{and}\ h^0(S,a L_1+b L_2)=(a+1)(b+1).$$ Hence, it suffices to check that $H^1(S,-(m-a) L_1-(n-b) L_2)=0$, which follows from the Batyrev-Borisov Vanishing Theorem.
\end{proof}

\begin{rem}
In fact, there is an appropriate divisor $D\sim a D_1+b D_2$ such that $$H^0(C,D)=\langle x^i y^j\,|\,i=0,\ldots,a; j=0,\ldots,b\rangle.$$
Here, $x$ and $y$ are viewed as functions on $C$ through the map $\varphi$ defined in Remark \ref{rem_phi}. In particular, by the adjunction formula $K_C=(K_S+C)|_C$, we have that $K_C\sim (m-2)D_1+(n-2)D_2$. So there is a canonical divisor on $C$ such that
$$H^0(C,K_C)=\langle x^i y^j\,|\,i=0,\ldots,m-2; j=0,\ldots,n-2\rangle.$$
\end{rem}

In what follows, we use the notations $I_r$ and $\delta_r(m,n)$, which have been introduced in Section 1.

\begin{lem} \label{lem_upperbound}
If $0<r<g$, then $d_r(C)\leq \delta_r(m,n)$.
%$$d_r(C)\leq \min_{(a,b,h)\in I_r} a n+b m-h,$$ where
%$I_r\subset \mathbb{Z}^3$ is the set of triples $(a,b,h)$ satisfying $0\leq a\leq m-1$, $0\leq b\leq n-1$, $h\geq 0$ and $r=(a+1)(b+1)-1-h$.
\end{lem}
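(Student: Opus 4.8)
The plan is to produce, for each admissible triple $(a,b,h)\in I_r$, a linear system on $C$ of rank $r$ and degree $an+bm-h$; taking the minimum of these degrees over all triples in $I_r$ then yields $d_r(C)\le\delta_r(m,n)$ directly from the definitions.

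First I would start from the divisor $aD_1+bD_2$. Since $(a,b,h)\in I_r$ we have $0\le a\le m-1$ and $0\le b\le n-1$, so Lemma \ref{lem_divisors} applies and gives $\rk_C(aD_1+bD_2)=(a+1)(b+1)-1$; moreover, using $\deg(D_1)=n$ and $\deg(D_2)=m$ from the start of this section, its degree is $an+bm$. The relation $r=(a+1)(b+1)-1-h$ says precisely that this divisor carries excess rank $h$ and excess degree $h$ compared to the target, and the point of the construction is to trade one against the other by deleting $h$ points. To that end I would invoke the elementary point-subtraction estimate $\rk_C(D-p)\ge\rk_C(D)-1$, valid for any divisor $D$ and any $p\in C$, which follows by taking cohomology of the short exact sequence $0\to\mathcal{O}_C(D-p)\to\mathcal{O}_C(D)\to\mathcal{O}_C(D)|_p\to0$, where the quotient is a length-one skyscraper sheaf at $p$, so that $h^0(C,D)-1\le h^0(C,D-p)\le h^0(C,D)$. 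Applying this $h$ times to arbitrary points $p_1,\dots,p_h\in C$, the divisor
\[
D:=aD_1+bD_2-p_1-\cdots-p_h
\]
satisfies $\deg(D)=an+bm-h$ and $\rk_C(D)\ge(a+1)(b+1)-1-h=r$. Choosing an $(r+1)$-dimensional subspace of $H^0(C,\mathcal{O}_C(D))$ then defines a $g^r_{an+bm-h}$ on $C$, whence $d_r(C)\le an+bm-h$.

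Minimizing $d_r(C)\le an+bm-h$ over all $(a,b,h)\in I_r$ gives exactly $d_r(C)\le\delta_r(m,n)$, which is the claim. I do not expect a serious obstacle here; the construction is a routine application of Lemma \ref{lem_divisors} together with the rank-drop inequality. The only point requiring a word of care is that the estimate produces rank $\ge r$ rather than rank equal to $r$, but this is harmless, since a linear system of rank $\ge r$ always contains a subsystem of rank exactly $r$ and the same degree (equivalently, one may remove the $h$ points generically so that each deletion lowers the rank by exactly one). One should also observe that $r\ge1$ forces $\rk_C(D)\ge0$ at every stage, so each intermediate divisor is linearly equivalent to an effective one and the construction never degenerates, even after all $h$ points have been removed.
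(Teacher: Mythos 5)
Your proposal is correct and matches the paper's own proof: both subtract an effective divisor of degree $h$ from $aD_1+bD_2$, apply Lemma \ref{lem_divisors} together with the rank-drop inequality $\rk_C(D-p)\geq \rk_C(D)-1$ to get $\rk_C(D)\geq r$, and minimize over $I_r$. Your explicit cohomological verification of the rank-drop estimate and the remark about achieving rank exactly $r$ via a generic choice of points are the same points the paper handles (the latter in its parenthetical ``if $E$ is generic, then in fact equality holds'').
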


\begin{proof}
%To start with, we show that the set $I_r$ is non-empty for each $r<g$. Since $g=(m-1)(n-1)$, we can always find $a,b\in\mathbb{Z}^2$ such that $0\leq a\leq m-2$, $0\leq b\leq n-2$ and $r\leq (a+1)(b+1)-1$ (e.g. take $(a,b)=(m-2,n-2)$). If the difference $h:=(a+1)(b+1)-1-r$ is at least $\min\{m,n\}$, say $h\geq m$, then $h':=(a+1)b-1-r\geq m-(a+1)\geq 0$ and $h'<h$. By continuing like this, we find that $I_r\neq \emptyset$. For instance, we have that $I_1=\{(1,0,0),(0,1,0)\}$ and $I_{g-1}=\{(m-2,n-2,0)\}$.
Fix an element $(a,b,h)\in I_r$ that attains the minimum in the formula for $\delta_r(m,n)$, i.e. $\delta_r(m,n)=a n+b m-h$. Let $D$ be a divisor on $C$ of the form $a D_1+b D_2-E$, where $E$ is an effective divisor of degree $h$. Using Lemma 1, we have that $$\rk_C(D)\geq \rk_C(a D_1+b D_2)-h=(a+1)(b+1)-1-h=r$$ (if $E$ is generic, then in fact equality holds), hence $$d_r(C)\leq \deg(D)=a n+b m-h=\delta_r(m,n).$$
\end{proof}

\begin{rem} \label{condition}
The condition that the triple $(a,b,h) \in I_r$ attains the minimum in the formula for $\delta_r(m,n)$, i.e. $\delta_r(m,n)=a n+b m-h$, implies that $(a,b)$ attains the maximum of the set
\[\big\{(m-a-1)(n-b-1)\,\big|\, h:= (a+1)(b+1) -1-r \geq 0\big\}.\]
\end{rem}

\begin{lem}
Consider the subset $I'_r\subset \mathbb{Z}^3$ of triples $(a,b,h)$ satisfying $0\leq a\leq m-2$, $0\leq b\leq n-2$, $0\leq h\leq \min\{a,b\}$ and $r=(a+1)(b+1)-1-h$. Then $$\delta_r(m,n)=\min\{a n+b m-h\,|\,(a,b,h)\in I'_r\},$$ hence the minimum formula for $\delta_r(m,n)$ is already attained on a strictly smaller subset $I'_r\subset I_r$.
\end{lem}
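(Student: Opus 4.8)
The plan is to prove the two inequalities separately. Since every triple satisfying the constraints defining $I'_r$ also satisfies those defining $I_r$ (the bounds $a\le m-2\le m-1$ and $b\le n-2\le n-1$ are weaker, and $0\le h\le\min\{a,b\}$ in particular gives $h\ge 0$), we have $I'_r\subseteq I_r$, and hence immediately $\min\{an+bm-h\,|\,(a,b,h)\in I'_r\}\geq\delta_r(m,n)$. The content of the statement is the reverse inequality: I must exhibit a minimizing triple that already lies in $I'_r$.

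First I would fix a triple $(a,b,h)\in I_r$ attaining $\delta_r(m,n)=an+bm-h$, and note that once $a,b$ are chosen the value $h=(a+1)(b+1)-1-r$ is forced, so $I_r$ is really a minimization over admissible pairs $(a,b)$. I then invoke Remark \ref{condition}: such a minimizing pair maximizes $(m-a-1)(n-b-1)$ among all admissible pairs. The key observation is that $(a,b)=(m-2,n-2)$ is admissible whenever $m,n\ge 2$ (which holds since $0<r<g$ forces $g\ge 2$, i.e. $(m-1)(n-1)\ge 2$): indeed $(a+1)(b+1)=(m-1)(n-1)=g\geq r+1$, so $h=g-1-r\ge 0$, and for this pair $(m-a-1)(n-b-1)=1$. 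Hence the maximum is at least $1$, so our minimizer satisfies $(m-a-1)(n-b-1)\geq 1$; since both factors are nonnegative integers, this forces $a\le m-2$ and $b\le n-2$, disposing of the two range conditions of $I'_r$ for free.

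It then remains to check $h\le\min\{a,b\}$, which I would establish by contradiction via an exchange move. Suppose $h>\min\{a,b\}$, say $a\le b$, so $h\geq a+1$. Replacing $(a,b,h)$ by $(a,b-1,h-(a+1))$ keeps the triple in $I_r$: the relation $r=(a+1)(b+1)-1-h$ is preserved (a direct substitution), $h-(a+1)\ge 0$ by assumption, and $b\ge 1$ since $b=0$ would give $h=a-r<a+1$. A one-line computation shows the value changes by $a+1-m$, which is strictly negative precisely because we already know $a\le m-2$, contradicting minimality. The case $b\le a$ is symmetric: decrease $a$ instead, obtaining value change $b+1-n<0$ from $b\le n-2$. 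Thus $h\le\min\{a,b\}$, the minimizer lies in $I'_r$, and $\min_{I'_r}(an+bm-h)\le\delta_r(m,n)$, completing the argument.

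I expect the only subtlety, rather than a genuine obstacle, to be ordering the logic correctly: the strict decrease in the exchange step relies on having already secured $a\le m-2$ (respectively $b\le n-2$), so the Remark \ref{condition} reduction must be carried out first. Once that is in place the remaining verifications (preservation of the constraint equation, nonnegativity of the new $h$, and the boundary case $b=0$) are entirely routine.
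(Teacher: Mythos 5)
Your proof is correct and follows essentially the same route as the paper's: you first use Remark \ref{condition} with the admissible pair $(m-2,n-2)$ to force $a\leq m-2$ and $b\leq n-2$, and then apply the same exchange move (replace $(a,b,h)$ by $(a,b-1,h-a-1)$, with the $b=0$ case ruled out by $r>0$) to obtain $h\leq\min\{a,b\}$. Your write-up is in fact slightly more careful than the paper's, in that you make explicit both the symmetric case (decreasing $a$ instead of $b$) and the point that the strict decrease $a+1-m<0$ requires having already established $a\leq m-2$.
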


\begin{proof}
Firstly, it suffices to restrict to the cases $0\leq a<m-1$ and $0\leq b<n-1$. Indeed, the maximum in Remark \ref{condition} is at least $1$ since $(a,b)=(m-2,n-2)$ satisfies the condition on $h$ for all $0<r<g$, hence $a\neq m-1$ and $b\neq n-1$. There is a more geometric reason for this: if $0<r<g$ and a divisor $D$ has rank $r$ and degree $d=d_r(C)$, then $D$ will have to be special (because of Riemann-Roch), hence it is contained in a canonical divisor $K_C\sim (m-2)D_1+(n-2)D_2$.

Moreover, it suffices to consider $0\leq h\leq \min\{a,b\}$. Indeed, assume for instance that $h\geq a+1$. If $b\geq 1$, then we can replace $b$ by $b-1$ and $h$ by $h-a-1$. This does not change the rank $r=(a+1)(b+1)-1-h$, but the degree $d$ decreases by $m-a-1>0$. If $b=0$, then $r=a-h<0$, a contradiction.
\end{proof}
%\begin{rem}
%If $(a,b,h)\in I'_r$, then one can see that $$r_C(a D_1+b D_2-E)=(a+1)(b+1)-1-h$$ for each effective divisor $E$ on $C$ of degree $h$.
%\end{rem}

To end this section, we return to the alternative definition of the gonality sequence $(d_r(C))_r$ as the smallest degrees of non-degenerate rational maps $f:C\to \mathbb{P}^r$. Let $C\subset S$ be a bidegree-$(m,n)$ curve. Although we did not prove Theorem \ref{thm_main} yet, it will turn out that the linear systems $g_d^r=|aD_1+bD_2-E|$ on $C$, with $(a,b,h)\in I_r$ attaining the minimum $\delta_r(m,n)=an+bm-h$, are of smallest degree. These linear systems correspond to tangible rational maps $f:C\to \mathbb{P}^r$. Let's give an example.

\begin{exam}
Take $(m,n)=(5,4)$, so $C$ has genus $g=12$. Below, we present the maps with smallest degree to $\mathbb{P}^r$ for $r\in\{1,2,3\}$. Hereby, we use the embedding $C\subset S\subset \mathbb{P}^3$ from Remark \ref{rem_phi}.
\begin{itemize}
\item For $r=1$, the minimum $d_1(C)=4$ is attained by $(1,0,0)\in I_1$. This triplet corresponds to the map
$$f_1:C\to\mathbb{P}^1: (1:x:y:xy)\mapsto (1:x).$$
\item For $r=2$, both $(2,0,0)$ and $(1,1,1)$ in $I_2$ attain the minimum $d_2(C)=8$. The first triplet corresponds to
$$f_2:C\to\mathbb{P}^2:(1:x:y:xy)\mapsto (1:x:x^2)$$ while the second one corresponds to maps of the form
$$f'_2:C\to\mathbb{P}^2:(1:x:y:xy)\mapsto (x-x':y-y':(x-x')(y-y'))$$ where $P=\varphi(x',y')\in C$.
\item For $r=3$, the minimum $d_3(C)=9$ is attained by $(1,1,0)\in I_3$ which corresponds to the identity map $$f_3:C\to\mathbb{P}^3:(1:x:y:xy)\mapsto (1:x:y:xy).$$
\end{itemize}
\end{exam}

\section{Characterization of reduced divisors on metric graphs}

In the following two sections, we will use the theory of linear systems of divisors on metric graphs/tropical curves. For the definitions, we refer to \cite{Bak,BJ,GK,MZ}. We will often use the terminology of chip firing (see e.g. \cite[Remark 2.2]{CDPR}).

The notion of reduced divisors on metric graphs will play an important role in the proof of the theorem. We begin by recalling the definition given in \cite{HKN, Luo}. Afterwards, we provide a new characterization of reduced divisors on metric graphs with arbitrary edge lengths.

\begin{defi}
Let $\Gamma$ be a metric graph and $X$ be a closed connected subset of $\Gamma$. Given $p \in \partial X$, the \emph{outgoing degree $\outdeg_X(p)$ of $X$ at $p$} is defined as the maximum number of internally disjoint segments in $\Gamma \setminus X$ with an open end in $p$. Let $D$ be a divisor on $\Gamma$. A boundary point $p \in \partial X$ is \emph{saturated with respect to $X$ and $D$} if $D(p) \geq \outdeg_X(p)$, and \emph{non-saturated} otherwise. A divisor $D$ is \emph{$p$-reduced} if it is effective in $\Gamma \setminus \{p\}$ and each closed connected subset $X \subseteq \Gamma \setminus \{ p \}$ contains a non-saturated boundary point.
\end{defi}

\begin{thm} \label{reduced}
Let $\Gamma$ be a metric graph with vertex set $V(\Gamma)$ (containing the topological vertices) and edge set $E(\Gamma)$.
Let $v\in V(\Gamma)$ and $D\in \text{Div}(\Gamma)$. Then $D$ is $v$-reduced if and only if the following conditions are satisfied:
\begin{enumerate}
\item $D$ is nonnegative on $\Gamma\setminus\{v\}$;
\item for all edges $e\in E(\Gamma)$, we have that $\sum_{p\in e^\circ}\,D(p) \leq 1$;
\item there is a total order $\prec$ on $V(\Gamma)$
$$v=v_0\prec v_1 \prec \ldots \prec v_r$$ such that $\tilde{D}(v_i)<\tilde{d}(v_i)$ for all $v_i\in V(\Gamma)\setminus\{v\}$, where
$$\tilde{D}(v_i):=D(v_i)+\sum_{\ell<i} \sum_{e=(v_\ell,v_i)} \sum_{P\in e^\circ}\,D(p) \quad \text{and}$$
$$\tilde{d}(v_i):=\sharp\{e=(v_i,v_\ell)\in E(\Gamma)\,|\,\ell<i\};$$
\item moreover, for all $v_i,v_j\in V(\Gamma)\setminus\{v\}$: if $v_i \prec v_j$ and
$$D(v_j)+\sum_{\ell<i} \sum_{e=(v_j,v_\ell)} \sum_{p\in e^\circ}\,D(p) \ < \ \sharp\{e=(v_\ell,v_j)\in E(\Gamma)\,|\,\ell<i\},$$
then $$\tilde{D}(v_i)\leq D(v_j)+\sum_{\ell<i} \sum_{e=(v_j,v_\ell)} \sum_{p\in e^\circ}\,D(p).$$
(The right hand side is smaller than or equal to $\tilde{D}(v_j)$.
%As a corollary, if $v_i<v_j$ and
%$$\sharp\{e=(v_\ell,v_j)\in E(\Gamma)\,|\,\ell<i\}=\sharp\{e=(v_\ell,v_i)\in E(\Gamma)\,|\,\ell<i\},$$
%then $\tilde{D}(v_i)\leq \tilde{D}(v_j)$.
)
\end{enumerate}
\end{thm}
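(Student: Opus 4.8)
The plan is to read Theorem \ref{reduced} as a metric-graph refinement of Dhar's burning criterion for $v$-reducedness (as developed in \cite{HKN,Luo}), and to prove the two implications separately. The guiding picture is that a fire is lit at $v$ and spreads along $\Gamma$, a vertex igniting once the number of fire fronts reaching it exceeds its chip count, while a chip sitting in the interior of an edge acts as a one-unit firewall that absorbs a single front. Condition (1) is just effectivity away from $v$; condition (2) is the statement that reducedness forbids two chips in one edge interior; and conditions (3)--(4) encode the ignition order together with its tie-breaking. The bookkeeping I will use throughout is that, by (2), for an edge $e=(v_\ell,v_i)$ the quantity $\sum_{p\in e^\circ}D(p)$ is either $0$ or $1$, so that $\tilde D(v_i)-D(v_i)$ counts exactly the edges from $v_i$ to strictly earlier vertices that carry an interior chip, and hence $\tilde d(v_i)-\bigl(\tilde D(v_i)-D(v_i)\bigr)$ counts the \emph{chip-free} edges from $v_i$ to earlier vertices.

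For the forward direction, assume $D$ is $v$-reduced. Condition (1) is immediate. For (2), if an edge $e$ carried two chips (or a single point carried value $\geq 2$), the closed segment joining them is a connected subset of $\Gamma\setminus\{v\}$ whose boundary points have outgoing degree $1$ and divisor value $\geq 1$, hence are saturated, contradicting reducedness. For (3) and (4) I would run the continuous burning from $v$; since $D$ is $v$-reduced the fire consumes all of $\Gamma$, and I order the vertices $v=v_0\prec v_1\prec\cdots$ greedily, at each step igniting a \emph{ready} vertex of smallest current value $\tilde D$. A vertex $v_i$ is ready exactly when the fronts reaching it, namely $\tilde d(v_i)-(\tilde D(v_i)-D(v_i))$, exceed $D(v_i)$, which rearranges to $\tilde D(v_i)<\tilde d(v_i)$; this is (3). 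Condition (4) then comes for free from the greedy choice: its hypothesis says precisely that $v_j$ was already ready at the moment $v_i$ was selected, and minimality of the selected value forces $\tilde D(v_i)$ to be at most the current value of $v_j$, which is the right-hand side of the conclusion of (4).

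For the reverse direction, assume (1)--(4) hold for an order $\prec$; I must show no closed connected $X\subseteq\Gamma\setminus\{v\}$ has all boundary points saturated. Condition (2) rules out an $X$ contained in a single edge interior, since such an $X$ would need two interior chips at its two boundary points. Otherwise $X$ contains a vertex; let $v_i$ be the one of smallest index, so $v_0,\dots,v_{i-1}\notin X$. For each chip-free edge $e=(v_i,v_\ell)$ with $\ell<i$, the endpoint $v_\ell$ lies outside $X$, so walking from $v_i$ toward $v_\ell$ the set $X$ must end at some boundary point $q$; as $e$ is chip-free, either $q$ is an interior point with $D(q)=0<\outdeg_X(q)$ (an immediate contradiction) or $q=v_i$, in which case the edge leaves $X$ directly at $v_i$ and contributes $1$ to $\outdeg_X(v_i)$. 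Hence $\outdeg_X(v_i)$ is at least the number of chip-free edges from $v_i$ to earlier vertices, while (3) gives $D(v_i)<$ that same number; thus $v_i$ is non-saturated, the desired contradiction.

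The part demanding the most care, and the likely main obstacle, is matching the discrete accounting in (3) and (4) to the continuous outgoing-degree computation: one must make the metric burning process rigorous, track the firewall effect of interior chips correctly in the ignition count, and, in the reverse direction, handle precisely those edges whose interior chip may or may not lie on $\partial X$ depending on how far $X$ extends into the edge. I would note that the minimal-index argument above uses only (1)--(3); condition (4) is the canonical tie-breaking that the greedy burning order satisfies, and its role is to render the certifying order essentially unique rather than to add force to the sufficiency direction.
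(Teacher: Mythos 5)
Your proposal is correct, and it splits from the paper's proof in an interesting way. The necessity direction is essentially identical: like the paper, you run Dhar's burning from $v$, record vertices in ignition order, observe that ignition of $v_i$ is equivalent to $\tilde{D}(v_i)<\tilde{d}(v_i)$ once one knows (via (1) and (2)) that each edge interior carries $0$ or $1$ chips, and obtain (4) by the greedy tie-break of always igniting a ready vertex of minimal current value --- exactly the paper's choice of ``$\tilde{D}(v_i)$ minimal.'' The sufficiency direction is where you genuinely diverge. The paper proves, by induction along $\prec$, that the entire graph burns: the chip-free edges to earlier vertices ignite $v_i$, after which the chipped edges burn because fire arrives from both sides; this implicitly rests on Luo's criterion that total burning is equivalent to $v$-reducedness. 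You instead verify the definition of a $v$-reduced divisor directly: for a putative closed connected $X\subseteq\Gamma\setminus\{v\}$ with all boundary points saturated, condition (2) kills the case $X\subseteq e^\circ$, and otherwise the $\prec$-minimal vertex $v_i$ of $X$ works, since each chip-free edge $e=(v_i,v_\ell)$ with $\ell<i$ either exits $X$ at an interior point $q$ with $D(q)=0<\outdeg_X(q)$ (immediate contradiction) or leaves $X$ at $v_i$ itself, so that (3), rewritten as $D(v_i)<\sharp\{\text{chip-free earlier edges}\}\leq\outdeg_X(v_i)$, makes $v_i$ non-saturated. This buys a self-contained sufficiency argument that never invokes the burning machinery, at the price of one topological point you should make explicit: connectivity of $X$ together with $v_\ell\notin X$ forces $X\cap e$ to be a single closed subinterval $[v_i,q]$ with $q\neq v_\ell$ (any component of $X\cap e^\circ$ could only attach to the rest of $X$ through an endpoint of $e$), which is what legitimizes your ``walk from $v_i$ toward $v_\ell$.'' Your closing observation that the sufficiency argument uses only (1)--(3), with (4) serving purely as a tie-breaking normalization of $\prec$, matches the paper's own remark that (4) can be omitted.
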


\begin{rem}
%If we define $\tilde{D}(v)=D(v)$ (and $\tilde{d}(v)=0$) for the vertex $v$ and $\tilde{D}=\sum_{v\in V(\Gamma)}\,\tilde{D}(v)$, then the divisors $D,\tilde{D}\in \Div(\Gamma)$ have the same degree, but $\tilde{D}$ is supported on the vertices.
The order $\prec$ in $(3)$ is not unique. The condition (4) can be omitted, but we add it to limit the freedom of choice of the order $\prec$.
\end{rem}

\begin{proof}
First, let's prove the `only if' implication. So assume that $D$ is $v$-reduced. The condition $(1)$ follows from the definition of reducedness. Condition $(2)$ is also true: if $\sum_{p\in e^\circ}\,D(p) > 1$, we can construct a subset $X\subset e^\circ$ such that $\outdeg_X(p)\geq D(p)$ for all $p\in\partial X$ (so $X$ does not burn). More precisely, if there exist two different points $q,q'$ on $e^\circ$ with $D(q),D(q')\geq 1$, take $X=[q,q']$; otherwise take $X=\{q\}$ with $D(q)\geq 2$. For finding a total order on the vertices that satisfies $(3)$ and $(4)$, we use Dhar's Burning Algorithm (see \cite[Algorithm 2.5]{Luo}). We start the fire at the sink $v=v_0$. By reducedness, the whole graph has to burn, so there should be a neighbor of $v$, say $v_1$, that burns. This means that $$D(v_1)+\sum_{e=(v_0,v_1)} \sum_{p\in e^\circ}\, D(p) < \sharp\{e=(v_0,v_1)\in E(\Gamma)\}.$$ We can choose $v_1$ in such a way that it satisfies the above inequality and that the left hand side is minimal. We define the total order on $\{v_0,v_1\}$ by $v_1 \prec v_0$, hence both $(3)$ and $(4)$ are satisfied on $\{v_0,v_1\}$. We proceed by induction on the number of vertices on which the order $\prec$ is already defined, so assume that $v_0 \prec v_1 \prec \ldots \prec v_{i-1}$ and that the conditions $(3)$ and $(4)$ are satisfied. For $v_i$, we take a vertex that burns next, so if we take $v_{i-1} \prec v_i$, then $\tilde{D}(v_i)<\tilde{d}(v_i)$. Again, we can take $v_i\in V(\Gamma)\setminus \{v_0,\ldots,v_{i-1}\}$ such that $\tilde{D}(v_i)$ is minimal. Hence conditions $(3)$ and $(4)$ are satisfied on $\{v_0,\ldots,v_i\}$.

For the `if' implication, we have to show that the whole graph burns when we start the fire at $v$. We show that if the vertices $\{v_0,v_1,\ldots,v_{i-1}\}$ burn, then also $v_i$ and all the edges $e=(v_\ell,v_i)$ with $\ell<i$ burn. Since $\tilde{D}(v_i)<\tilde{d}(v_i)$ and $\sum_{p\in e^\circ}\,D(p) \leq 1$ for each edge $e=(v_\ell,v_i)$ with $\ell<i$, there are less chips in $v_i$ then there are edges $e=(v_\ell,v_i)$ with $\sum_{p\in e^\circ}\,D(p)=0$, so all these edges and $v_i$ burn. Now also the edges $e=(v_\ell,v_i)$ with $\ell<i$ and $\sum_{p\in e^\circ}\,D(p)=1$ burn, since the fire comes from two directions.
\end{proof}

\begin{exam}
If we take $\Gamma=K_d$, $V(\Gamma)=\{v_1,\ldots,v_d\}$ and $v=v_d$, we get back \cite[Lemma 17]{CP}: indeed, if we assume
$$v=v_d \prec v_1 \prec v_2 \prec \ldots \prec v_{d-1},$$
we have that $$\tilde{D}(v_i)< \tilde{d}(v_i)=i$$ for all $v_i\in V(\Gamma)\setminus\{v_d\}$. Condition $(4)$ implies that $$\tilde{D}(v_1)\leq \tilde{D}(v_2)\leq \ldots \leq \tilde{D}(v_{d-1}).$$
\end{exam}

\begin{exam}
If $\Gamma=K_{m,n}$ with $V(\Gamma)=\{v_1,v_2,\ldots,v_m,w_1,w_2,\ldots,w_n\}$ and $v=v_m$, after a relabeling of the other vertices, we can assume that $$v_m \prec v_1 \prec \ldots \prec v_{m-1} \quad \text{and} \quad v_m \prec w_1 \prec w_2 \prec \ldots \prec w_n.$$
Then $$\tilde{d}(v_j)=\sharp\{w_i\,|\,w_i \prec v_j\} \quad \text{and} \quad \tilde{d}(w_i)=\sharp\{v_j\,|\,v_j \prec w_i\}.$$
Condition $(4)$ implies that
$$\tilde{D}(v_1)\leq \tilde{D}(v_2)\leq \ldots \leq \tilde{D}(v_{m-1}) \quad \text{and} \quad \tilde{D}(w_1)\leq \tilde{D}(w_2)\leq \ldots \leq \tilde{D}(w_{n}).$$
Note that the conditions $(1)-(2)-(3)$ imply that $$r_i:=\tilde{D}(w_i)+1-\sharp\{j\in \{1,2,\dots,m-1\}\mid \tilde{D}(v_j)\leq i-2\}\leq 1.$$
(The $r$-vector $(r_1,\ldots,r_n)$ appears in \cite{DLB}.)
Indeed, if $v_j \prec w_i$ with $j\in\{1,\ldots,m-1\}$, then $$\tilde{D}(v_j)\leq \tilde{d}(v_j)-1 \leq i-2,$$ hence
\begin{align*}
\tilde{D}(w_i) &\leq \tilde{d}(w_i)-1 \\ &= \sharp\{j\in \{1,2,\dots,m-1\}\,|\,v_j \prec w_i\} \\
&\leq \sharp\{j\in \{1,2,\dots,m-1\}\mid \tilde{D}(v_j)\leq i-2\}.
\end{align*}
\end{exam}

\begin{rem}
If we define $\tilde{D}(v)=D(v)$ (and $\tilde{d}(v)=0$) for the vertex $v$ and $\tilde{D}=\sum_{v\in V(\Gamma)}\,\tilde{D}(v)$, then the divisors $D,\tilde{D}\in \Div(\Gamma)$ have the same degree, but $\tilde{D}$ is supported on the vertices.

One could wonder whether the following statement is valid: if $G$ is a regular graph (without loop edges) and $\Gamma$ is the corresponding metric graph where all the edge lengths are equal to one, then $\rk_\Gamma(D)\leq \rk_G(\tilde{D})$. By \cite[Theorem 1.3]{HKN}, we have that $\rk_G(\tilde{D})=\rk_\Gamma(\tilde{D})$. This statement would imply \cite[Conjecture 3.14]{Bak}, which predicts that the gonality sequences of $G$ and $\Gamma$ are equal. Unfortunately, there are counterexamples for the statement (personal communication with Jan Draisma and Alejandro Vargas).
\end{rem}

\section{The gonality sequence for metric complete bipartite graphs}

In this section, we focus on the metric complete bipartite graph $K_{m,n}$ with edge lengths $l(e)=1$, and prove Theorem
\ref{thm_Kmn}.

If $m=1$ or $n=1$, then $K_{m,n}$ is a tree and the theorem is obviously true.
So we fix integers $m,n>1$ and denote the topological vertices of $K_{m,n}$ by $$v_1, \dots, v_m, w_1, \dots, w_n.$$

\vspace{\baselineskip}

We begin by proving an upper bound for the gonality sequence $(d_r)_{r\geq1}$. To be precise, we show that the divisor $\sum_{i=1}^m b (v_i) + \sum_{i=1}^n a (w_i)$ on $K_{m,n}$ has degree $a n+b m$ and rank at least $(a+1)(b+1)-1$. Since for every point $p \in K_{m,n}$, we have that $\rk(D-p)\geq \rk(D)-1$, we see that the minimum formula $\delta_r(m,n)$ gives an upper bound for $d_r(K_{m,n})$.

In \cite[\S 2]{DLB}, the authors provide an algorithm to compute the rank of a divisor $D$ on the discrete complete bipartite graph $K_{m,n}$.
The algorithm takes a divisor as input. Step zero consists of computing the $v_m$-reduced divisor $D'$ equivalent to it. If $D'(v_m)<0$, then the divisor has rank $-1$. Otherwise the algorithm proceeds by taking a vertex $w_i$ such that $D'(w_i) = 0$ and by considering the divisor $D_1 = D - (w_i)$. Again, it computes the $v_m$-reduced divisor $D'_1$ linearly equivalent to $D_1$. If $D_1'(v_m) <0$, then the algorithm stops, otherwise it iterates. The algorithm terminates after at most $\deg(D)$ steps. The rank of the divisor $D$ is given by the number of steps minus one.

We use the algorithm to compute the rank of the divisor $D = \sum_{i=1}^m b (v_i) + \sum_{i=1}^n a (w_i)$ on the discrete complete bipartite graph. By \cite[Theorem 1.3]{HKN}, the divisor $D$ has the same rank on the complete bipartite graph $K_{m,n}$ with edge lengths equal to one.

\begin{thm}
The divisor $\sum_{i=1}^m b (v_i) + \sum_{i=1}^n a (w_i)$ on $K_{m,n}$ has degree $a n+b m$ and rank $(a+1)(b+1)-1$.
\end{thm}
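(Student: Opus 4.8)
The degree computation is immediate: the divisor $D=\sum_{i=1}^m b(v_i)+\sum_{i=1}^n a(w_i)$ has degree $bm+an$ by summing the coefficients. For the rank, the strategy is to run the $\DLB$ rank-computation algorithm recalled above on $D$ and verify that it terminates after exactly $(a+1)(b+1)$ steps. Since the upper bound $\rk(D)\geq (a+1)(b+1)-1$ is already granted by the discussion preceding the statement (it follows from Lemma \ref{lem_divisors} and the specialization philosophy, but here it is proven combinatorially via the explicit divisor), the real content is the matching upper bound $\rk_{K_{m,n}}(D)\leq (a+1)(b+1)-1$, i.e. showing the algorithm halts after at most $(a+1)(b+1)$ steps.

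\textbf{Key steps.}
First I would observe, via \cite[Theorem 1.3]{HKN}, that it suffices to work on the discrete graph, so I may apply the integral algorithm directly. Next, I would identify the initial $v_m$-reduced divisor equivalent to $D$: starting from the symmetric divisor $D$, I would apply Dhar's burning algorithm with sink $v_m$ and track the stabilized configuration, using Theorem \ref{reduced} (and the $K_{m,n}$ analysis in the Example following it) to write it in closed form. The heart of the argument is to understand how the reduced divisor evolves under the operation $D\mapsto D-(w_i)$ followed by re-reduction at $v_m$. Here I would exploit the symmetry: since all $w_i$ carry the same coefficient $a$ and all $v_i$ the same coefficient $b$, the reduced divisors occurring throughout the run depend only on a small number of parameters (essentially the $r$-vector of \cite{DLB} appearing in the Example), so I can set up a recursion describing one algorithm step and count the total number of steps. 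The expected bookkeeping is that each "column" of chips on the $w$-side can absorb firings in a controlled way, and after $(a+1)(b+1)$ subtractions the $v_m$-coordinate of the reduced divisor drops below zero.

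\textbf{The main obstacle.}
The delicate point is controlling the $v_m$-reduced form after each subtraction $D\mapsto D-(w_i)$: subtracting a chip can trigger a cascade of chip-firing moves when we re-reduce, and I must show the resulting reduced divisor again has the symmetric/columnar shape that feeds the recursion. Concretely, I expect the argument to hinge on choosing, at each step, the vertex $w_i$ with $D'(w_i)=0$ in a way compatible with the order $\prec$ from Theorem \ref{reduced}, so that the $r$-vector decreases predictably. The calculation that the run length is \emph{exactly} $(a+1)(b+1)$ — rather than merely at most that — is what pins the rank to the claimed value; combined with the already-established lower bound this yields equality. I anticipate the cleanest route is an explicit invariant (a monovariant counting available subtractions) rather than a case-by-case trace of the burning algorithm, which would otherwise become unwieldy as $m,n,a,b$ grow.
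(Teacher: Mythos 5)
Your strategy is the paper's strategy: pass to the discrete graph via \cite[Theorem 1.3]{HKN} and run the D'Adderio--Le Borgne algorithm on $D=\sum_{i=1}^m b(v_i)+\sum_{i=1}^n a(w_i)$, counting the steps until the $v_m$-reduced form goes negative at the sink. But your proposal stops exactly where the proof begins. The entire content of the paper's argument is the bookkeeping you defer to an unspecified ``recursion'' or ``monovariant'': the paper exhibits, in closed form, the $v_m$-reduced representative after every subtraction, indexed as $D_{s,t}$ with $1\leq s\leq b+1$ and $0\leq t\leq a$; for $s\leq b$, $t \leq a$ it is
\[
\sum_{i=1}^{m-1}(b-s)(v_i)+\big((a-t)n+b\big)(v_m)+\sum_{i=1}^{s}(m-1)(w_i)+\sum_{i=b+2}^{n}t\,(w_i),
\]
and for $s=b+1$ it is $\sum_{i=1}^{m-1}b(v_i)+\big(b+(a-t-1)n\big)(v_m)+\sum_{i=b+2}^{n}(t+1)(w_i)$. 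One then verifies (by Dhar's algorithm, or Theorem \ref{reduced}) that these are indeed $v_m$-reduced, that each arises from the previous one by subtracting the smallest-index $w_i$ with zero coefficient and re-reducing, and that at step $(b+1,a)$ the coefficient of $v_m$ is $b-n<0$, so the run length is exactly $(a+1)(b+1)$. Note also that the stable shape is not ``symmetric'' in the sense you suggest: the reduced divisors are constant on $v_1,\dots,v_{m-1}$ but split the $w$-side into \emph{three} blocks (coefficients $m-1$, $0$, and $t$), so a recursion in a single $r$-vector parameter would have to be replaced by this three-block ansatz in any case. Without the explicit forms, the cascade problem you correctly identify as the main obstacle remains unresolved, and the proposal is not yet a proof.

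Two smaller points. First, you mislabel the bounds: $\rk(D)\geq(a+1)(b+1)-1$ is the \emph{lower} bound, and it is not ``already granted'' before this theorem --- the preceding paragraph of the paper merely announces it, and deriving it from Lemma \ref{lem_divisors} plus specialization would require constructing the degeneration and checking that $aD_1+bD_2$ specializes to this divisor, which the paper only sets up later (Corollary \ref{cor_generic}) and uses in the opposite logical direction. Fortunately this is moot: the algorithm computes the rank \emph{exactly} (number of steps minus one), so the full trace delivers both inequalities simultaneously, and framing the task as ``only the upper bound'' both undersells the computation and leans on a lower bound you have not actually established. Second, the statement implicitly carries the standing constraints $0\leq a\leq m-1$ and $0\leq b\leq n-1$ (the closed forms above use $b+1\leq n$, and the formula for the rank fails for large $a,b$ by Riemann--Roch); your write-up should record where these enter.
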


\begin{proof} We write the subsequent divisors appearing in the algorithm as $D_{s,t}$ where $1\leq s \leq b+1$ and $t \geq 0$. In this way, we obtain a sequence of divisors:
\[ D_{1,0},\, D_{2,0}, \dots , \,  D_{b+1,0}, \,  D_{1,1}, \, D_{2,1}, \dots, \, D_{b+1,1},  \, D_{1,2}, \dots
\]
At every step, we subtract the divisor $(w_{i})$ corresponding to the vertex with zero coefficient and smallest index $i$.
The first two divisors are:
\[ D_{1,0} = D_{0,0} - (w_1) \sim   \sum_{i=1}^{m-1} (b-1) (v_i) + (b+an -1 )(v_m) + (m-1)(w_1)
	\]
	\[
	D_{2,0} = D_{1,0} - (w_2) \sim  \sum_{i=1}^{m-1} (b-2) (v_i) + (b+an -2 )(v_m) + \sum_{i=1}^2 (m-1)(w_i)
 \]
At the step $(s,t)$ with $s\leq b$ and $t \leq a$,  the divisor $D_{s,t}$ is linearly equivalent to the following $v_m$-reduced divisor:

\[
\sum_{i =1}^{m-1} (b- s) (v_i) + \big((a-t)n + b \big)(v_m) + \sum_{i=1}^{s} (m-1)(w_i) + \sum_{i=b+2}^n t(w_i).
\]
At the step $(b+1,t)$ with $t \leq a$, the divisor $D_{b+1,t}$ is linearly equivalent to the following $v_m$-reduced divisor:
\[
\sum_{i =1}^{m-1} b (v_i) + \big(b + (a-t-1)n \big)(v_m) + \sum_{i=b+2}^n(t+1)(w_i).
\]
Therefore, at step $(b,a)$, we get
\[
D_{b,a} \sim b(v_m)+ \sum_{i=1}^{b} (m-1)(w_i) + \sum_{i=b+2}^n a(w_i).
\]
At the following step, we obtain the divisor
\[
D_{b+1,a} \sim \sum_{i =1}^{m-1} b (v_i) + \big(b -n \big)(v_m) + \sum_{i=b+2}^n(a+1)(w_i),
\]
which is $v_m$-reduced and has negative coefficient at the vertex $v_m$. The algorithm terminates. Since we have made $(a+1)(b+1)$ steps in total, the rank is $(a+1)(b+1)-1$.
\end{proof}

%\begin{rem} This result can be also checked using the Specialization Lemma \cite[Lemma 2.8]{Bak}. The divisor $aD_1 + bD_2$  of Lemma \ref{lem_divisors} specializes to the divisor $\sum_{i=1}^m b (v_i) + \sum_{i=1}^n a (w_i)$ on $K_{m,n}$.
%\end{rem}

We are left with showing that the minimum formula also provides a lower bound. So, if we take $(a,b,h)\in I_r$ so that it attains the minimum (hence $(a,b)$ attains the maximum in Remark \ref{condition}), then we need to show that each divisor $D$ with $\deg(D)\leq a n + b m - h - 1$ has $\rk(D)<r$.

Fix a divisor $D$ with $\deg(D)\leq a n + b m - h - 1$ and assume that it is reduced with respect to $v_m$. By Theorem \ref{reduced} there exists an ordering $\prec$ on the vertices such that $\tilde{D}(v)<\tilde{d}(v)$ for all $v\in V(K_{m,n})\setminus\{v_m\}$, where
$$\tilde{D}(v) =D(v)+  \sum_{v'\prec v} \sum_{p\in (v,v')^{\circ}}\,D(p) \qquad \text{and}$$
$$\tilde{d}(v) = \sharp\{e=(v,v')\in E(K_{m,n})\,|\,v'\prec v\}.$$ By relabeling if necessary, we may assume that
$$v_m \prec v_1 \prec \dots \prec v_{m-1} \ \text{and}\ w_1 \prec \dots \prec w_n.$$

\begin{lem}
The ordering $\prec$ can be chosen such that $\tilde{D}(v_i)=\tilde{d}(v_i)-1$.
\end{lem}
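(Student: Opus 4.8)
The plan is to show that if $\tilde{D}(v_i) < \tilde{d}(v_i) - 1$ for some $v_i$ (strict inequality by more than one), then we can re-choose the ordering $\prec$ to repair this, eventually forcing equality $\tilde{D}(v_i) = \tilde{d}(v_i) - 1$ for every vertex $v_i \in \{v_1, \dots, v_{m-1}\}$. The key intuition is that $\tilde{d}(v_i) = \sharp\{w_j \mid w_j \prec v_i\}$ counts how many $w$-vertices precede $v_i$; so the quantity $\tilde{d}(v_i)$ is controlled entirely by \emph{where $v_i$ sits in the interleaving} of the $v$-chain and the $w$-chain. Since Theorem~\ref{reduced} only requires \emph{some} valid order to exist, and condition $(3)$ is an inequality with slack, I have the freedom to slide each $v_i$ later in the order (past some $w_j$'s) as long as reducedness is preserved, thereby increasing $\tilde{d}(v_i)$ until the slack in $(3)$ is exactly consumed.

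First I would make precise how $\tilde{D}(v_i)$ and $\tilde{d}(v_i)$ change as $v_i$ is moved past an adjacent $w_j$ in the order. Because $K_{m,n}$ has exactly one edge between each $v_i$ and each $w_j$, moving $v_i$ to just after $w_j$ (when previously $v_i \prec w_j$) increases $\tilde{d}(v_i)$ by one (the edge $(v_i, w_j)$ now counts), while the contribution to $\tilde{D}(v_i)$ changes only by the chip content on the interior of that single edge $e = (v_i, w_j)$, namely $\sum_{p \in e^\circ} D(p) \in \{0,1\}$ by condition $(2)$. So each such swap raises $\tilde{d}(v_i)$ by $1$ and raises $\tilde{D}(v_i)$ by at most $1$; the net slack $\tilde{d}(v_i) - \tilde{D}(v_i)$ is nondecreasing under such moves in the worst case but can be driven down when the intervening edge carries a chip. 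The point is to argue that among all valid orders produced by Dhar's burning algorithm, one can greedily delay the burning of each $v_i$ so that it ignites at the latest possible moment, which is precisely when one more unit of slack would make it fail to burn — i.e. when $\tilde{D}(v_i) = \tilde{d}(v_i) - 1$.

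Concretely I would revisit the inductive construction of $\prec$ in the proof of Theorem~\ref{reduced}: at the step where $v_i$ is inserted, instead of choosing it as soon as it burns, I would postpone inserting $v_i$ and keep burning the $w$-vertices first, reinserting $v_i$ only at the moment immediately before it would stop being forced to burn. Since reducedness guarantees the whole graph eventually burns, $v_i$ must ignite at some last critical threshold; at that threshold the burning condition $\tilde{D}(v_i) < \tilde{d}(v_i)$ holds but would fail with one fewer burnt neighbor, which is exactly the equality $\tilde{D}(v_i) = \tilde{d}(v_i) - 1$. Condition $(4)$ of Theorem~\ref{reduced}, which was included precisely ``to limit the freedom of choice of the order,'' should guarantee that this greedy-latest choice is consistent with a genuinely valid reduced order and does not disturb the analogous equalities already arranged for $v_1, \dots, v_{i-1}$.

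The main obstacle I anticipate is bookkeeping the interaction between the two interleaved chains: delaying $v_i$ past a block of $w$-vertices may change $\tilde{d}(w_j)$ and $\tilde{D}(w_j)$ for those $w_j$ that $v_i$ jumps over, since each such $w_j$ now has $v_i$ preceding it. I must check that these induced changes do not break condition $(3)$ for the $w$-vertices, i.e. that every $w_j$ still satisfies $\tilde{D}(w_j) < \tilde{d}(w_j)$ after the reordering. This is where the constraint $h \le \min\{a,b\}$ and the maximality of $(a,b)$ from Remark~\ref{condition} and Lemma~7 should enter, pinning down the chip distribution tightly enough that the slack can always be fully absorbed. I expect the delicate case to be when an edge between $v_i$ and the $w_j$ it passes carries a chip, since then the swap consumes slack on $v_i$ without creating fresh slack, and one must verify that the total degree bound $\deg(D) \le an + bm - h - 1$ leaves no room for slack to persist at every $v_i$ simultaneously.
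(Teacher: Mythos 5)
Your core mechanism moves $v_i$ in the wrong direction, and your slack bookkeeping for it contains a sign error that is fatal to the approach. Compute the effect of one of your swaps: moving $v_i$ later past $w_j$ increases $\tilde{d}(v_i)$ by $1$ and increases $\tilde{D}(v_i)$ by $c:=\sum_{p\in(v_i,w_j)^{\circ}}D(p)\in\{0,1\}$, so the slack $\tilde{d}(v_i)-\tilde{D}(v_i)$ changes by $1-c\geq 0$: it \emph{never} decreases. Your claim that the slack ``can be driven down when the intervening edge carries a chip'' is false --- with a chip the slack is unchanged, without one it grows --- so repeated later-moves can never turn $\tilde{D}(v_i)\leq\tilde{d}(v_i)-2$ into $\tilde{D}(v_i)=\tilde{d}(v_i)-1$. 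The burning picture fails for the same reason: burnability is monotone (once $\tilde{D}(v_i)<\tilde{d}(v_i)$ holds, it continues to hold as more $w$'s burn), so there is no ``latest possible moment'' before $v_i$ ``would stop being forced to burn''; delaying only inflates the slack. Moreover, sliding $v_i$ later past $w_j$ \emph{decreases} $\tilde{d}(w_j)$ by one while decreasing $\tilde{D}(w_j)$ by only $c$, so condition $(3)$ at $w_j$ can genuinely break (take $\tilde{D}(w_j)=\tilde{d}(w_j)-1$ and $c=0$); the obstacle you flagged is thus real in your direction, and no appeal to $h\leq\min\{a,b\}$, to the maximality of $(a,b)$ from Remark \ref{condition}, or to condition $(4)$ can repair it --- the lemma is about an arbitrary $v_m$-reduced divisor, its proof uses no degree bound, and those global constraints only enter later, in the proof of Claim \ref{claim}.

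The paper's argument is the mirror image of yours, and once the direction is reversed everything you were worried about becomes automatic. Since $K_{m,n}$ has no $v$--$v$ edges, the $v$-vertices lying between two consecutive $w$'s in the order can be permuted freely; so place a deficient $v_i$ (one with $\tilde{D}(v_i)\leq\tilde{d}(v_i)-2$) immediately after some $w_j$ and swap it \emph{earlier}, past $w_j$. Then $\tilde{d}(v_i)$ drops by $1$ and $\tilde{D}(v_i)$ by $c\in\{0,1\}$, so the slack at $v_i$ weakly decreases while remaining $\geq 1$, and condition $(3)$ is preserved at $w_j$ without any global input, since $\tilde{d}(w_j)$ rises by $1$ and $\tilde{D}(w_j)$ by at most $1$. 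Chip-free swaps strictly shrink the slack at $v_i$, and chip swaps strictly shrink $\tilde{D}(v_i)\geq 0$, so the process terminates with $\tilde{D}(v_i)=\tilde{d}(v_i)-1$ for every $i$. In Dhar terms: burn each $v_i$ as \emph{early} as possible, not as late as possible; at the first moment $v_i$ becomes burnable, the threshold has just been crossed and one has exactly $\tilde{D}(v_i)=\tilde{d}(v_i)-1$.
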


\begin{proof}
Suppose that a vertex $v_i$ does not satisfy $\tilde{D}(v_i)=\tilde{d}(v_i)-1$, hence $\tilde{D}(v_i)<\tilde{d}(v_i)-1$. We may assume that $v_i$ comes just after a vertex $w_j$ in the order $\prec$. Indeed, since all the vertices $v$ between $w_j$ and $w_{j+1}$ have the same value $\tilde{d}(v)$, we can order them by their value of $\tilde{D}(v)$. Below, we show that we can switch $v_i$ and $w_j$ in the order $\prec$. By repeatedly doing these kind of switches, we can make sure that $\tilde{D}(v_i)=\tilde{d}(v_i)-1$ for all $i$.

After switching $v_i$ with $w_j$, the value of $\tilde{d}(v_i)$ decreases by one and $\tilde{d}(w_j)$ increases by one. If $\sum_{p\in (v_i,w_j)^{\circ}}\,D(p)=1$, the same happens for $\tilde{D}(v_i)$ and $\tilde{D}(w_j)$, while if $\sum_{p\in (v_i,w_j)^{\circ}}\,D(p)=0$, the values $\tilde{D}(v_i)$ and $\tilde{D}(w_j)$ remain unchanged. Hence, we will still get that $\tilde{D}(v_i)<\tilde{d}(v_i)$ and $\tilde{D}(w_j)<\tilde{d}(w_j)$, so we are allowed to do the switch.
\end{proof}

If $D(v_m) < r$, we can already conclude that the rank of the divisor cannot be $r$, so assume $D(v_m) \geq r$.

We write
\[D(v_m) = An + B, \ \text{with} \ A, B \in \mathbb{Z}_{\geq 0} \ \text{and} \ 0 \leq B \leq n-1.\]
By chip-firing $A+1$ or $A$ times from $v_m$, we obtain the divisors:
\begin{align*} D_1 = (B-n)(v_m) & + \sum_{i=1}^n \Big[ \big( D(w_i) + A +1\big)(w_i) +  \sum_{v_j \prec w_i} \sum_{p\in (w_i,v_j)^{\circ}}\,D(p) \,(p)\Big] \\
 & + \sum_{i=1}^{m-1} \Big[D(v_i) (v_i) + \sum_{w_j \prec v_i} \sum_{p\in (v_i,w_j)^{\circ}}\,D(p) \,(p)\Big],
\end{align*}
\begin{align*}D_2 = B(v_m) &+ \sum_{i=1}^n \Big[ \big( D(w_i) + A\big)(w_i) +  \sum_{v_j \prec w_i} \sum_{p\in (w_i,v_j)^{\circ}}\,D(p) \,(p)\Big] \\
&+ \sum_{i=1}^{m-1} \Big[D(v_i) (v_i) +  \sum_{w_j \prec v_i} \sum_{p \in (v_i,w_j)^{\circ}}\,D(p) \,(p)\Big].
\end{align*}

We can conclude that $\rk(D)<r$ if we are able to construct an effective divisor $E_1$ that satisfies the following conditions:
\begin{itemize}
\item the degree of $E_1$ is at most $r=(a+1)(b+1) - h -1$,
\item $E_1$ is supported on the vertices $w_i$ and on the points $p$ of $\supp(D)$ with $p \in (w_i, v_j)^{\circ}$ for $v_j \prec w_i$, and,
\item $D_1 - E_1$ is $v_m$-reduced with respect to the same ordering $\prec$,
\end{itemize}
or, if we can construct an effective divisor $E_2$ that satisfies:
\begin{itemize}
\item the degree of $E_2$ is at most $r=(a+1)(b+1) - h -1$,
\item $E_2$ is supported on the sink $v_m$, on the vertices $w_i$ and on the points $p$ of $\supp(D)$ with $p \in (w_i, v_j)^{\circ}$ for $v_j \prec w_i$,
\item the coefficient of $E_2$ at $v_m$ is $B+1$, and,
\item $D_2 - E_2$ is $v_m$-reduced with respect to the same ordering $\prec$.
\end{itemize}
Indeed, if such a divisor $E_i$ can be constructed (with $i\in\{1,2\}$), then the divisor $D_i-E_i$ is $v_m$-reduced and has a negative coefficient at $v_m$. Therefore, by the definition of reduced divisors, there is no effective divisor equivalent with $D_i-E_i\sim D - E_i$, hence $\rk(D)<\deg(E_i)\leq r$.

We claim that it is always possible to construct an $E_1$ or an $E_2$ satisfying the conditions. In other words, when we add the principal divisor
\[
 -n (v_m) + \sum_{i = 1}^{n} (w_i)
\]
to $D$ until it either has a negative value at $v_m$, or one time before that, then (at least) one of the two resulting divisors is $v_m$-reduced after subtracting an effective divisor of degree at most $r=(a+1)(b+1)-h-1$.

\vspace{\baselineskip}

For notational purposes, set $\alpha_i := \tilde{D}(w_i)  + A - (\tilde{d}(w_i) -2)$ for each $i\in\{1,\ldots,n\}$ and write $x^+:=\max\{x,0\}$ for any $x\in\RR$. Then $E_1$ can be constructed if
\[
\sum_{i=1}^n \alpha_i^+ \ \leq\  (a+1)(b+1) - h -1,
\]
while $E_2$ can be constructed if
\[
B + 1 + \sum_{i=1}^n (\alpha_i-1)^+ \ \leq\ (a+1)(b+1) - h -1.
\]

The sequence $(\alpha_i)_{i=1,\ldots,n}$ satisfies $A+2-m\leq \alpha_i \leq A+1$ and $\alpha_1=A+1$.
%\begin{equation}\tag{*} \label{inequalities}
%A+2-m\leq \alpha_i \leq A+1.
%\end{equation}
Moreover, there is a formula for the sum $\sum_{i=1}^n \alpha_i$. Indeed,
since $$mn = \sharp(E(K_{m,n})) = \sum_{v\in V(K_{m,n})}\, \tilde{d}(v) = \sum_{i=1}^n \tilde{d}(w_i) + \sum_{i<m} \tilde{d}(v_i),$$ it follows that
\[
\begin{aligned}
\sum_{i=1}^{n} \alpha_i &= \sum_{i=1}^n \tilde{D}(w_i) - \sum_{i=1}^n \tilde{d}(w_i) + n(A+2) \\
&=  \sum_{i=1}^n \tilde{D}(w_i) + \sum_{i<m} \tilde{d}(v_i) -nm + n(A+2) \\
&=  \sum_{i=1}^n \tilde{D}(w_i) + \sum_{i<m} \tilde{D}(v_i) + (m-1) -nm + n(A+2) \\
&= \textrm{deg}(D) - D(v_m) + (m-1) - nm + n(A+2) \\
&= (an + bm -h -1) -(An +B) + (m-1) - nm + n(A+2) \\
&= an + bm - h +m + 2n -nm - B - 2.
\end{aligned}
\]

\begin{defi} Given integer parameters $m,n,a,b,h,A,B$, we say that the sequence $\alpha=(\alpha_i)_{i=1,\ldots,n}$
satisfies the conditions $(*)$ if and only if
$$A+2-m\leq \alpha_i \leq A+1$$ and $$\sum_{i=1}^{n} \alpha_i= an + bm - h +m + 2n -nm - B - 2.$$
\end{defi}

Hence, in order to show that $\rk(D)<r$, it is sufficient to prove the following claim, which rephrases the existence of an effective divisor $E_1$ or $E_2$ into some property of the sequence $(\alpha_i)_{i=1,\ldots n}$.

\begin{claim} \label{claim}
Consider integers $m,n,a,b,h,A,B\geq 0$ such that
$$r:=(a+1)(b+1)-1-h \ ,\ 0<r<(m-1)(n-1)\ ,$$
$$a\leq m-1\ ,\ b\leq n-1\ ,B\leq n-1$$
and such that $(a,b)$ attains the maximal value of $(m-a-1)(n-b-1)$.
Let $\alpha = (\alpha_i)_{i=1,\ldots n}$ be a sequence of integers satisfying the conditions $(*)$ and
define
\[ t_1 := \sum_{i=1}^n \alpha_i^+ \ \text{and} \ t_2 := B + 1 + \sum_{i=1}^n \big(\alpha_i -1\big)^+.\]
Then $\min \{t_1, t_2\} \leq r$.
\end{claim}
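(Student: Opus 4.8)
The plan is to treat the statement as a purely combinatorial optimization: maximize $\min\{t_1,t_2\}$ over all integer sequences $\alpha$ satisfying $(*)$ and show the maximum never exceeds $r$. The first step is a convexity reduction. For a fixed sum, consider the elementary move that replaces a pair $(\alpha_i,\alpha_j)$ with $\alpha_i\geq\alpha_j$ by $(\alpha_i+1,\alpha_j-1)$, staying inside the box $[A+2-m,A+1]$. Writing $t_1=\sum_i\alpha_i^+$ and $t_2=B+1+\sum_i(\alpha_i-1)^+$ as sums of the convex functions $x\mapsto x^+$ and $x\mapsto(x-1)^+$, a one-line check shows that each such move changes $t_1$ by $[\alpha_i\geq 0]-[\alpha_j\geq 1]\geq 0$ and $t_2$ by $[\alpha_i\geq 1]-[\alpha_j\geq 2]\geq 0$; hence neither $t_1$ nor $t_2$, and therefore not $\min\{t_1,t_2\}$, ever decreases. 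Iterating these moves (the quantity $\sum_i\alpha_i^2$ strictly increases and is bounded, so the process terminates) I may assume $\alpha$ is \emph{extremal}: all of its entries equal $A+1$ or $A+2-m$, with at most one intermediate value. It therefore suffices to bound $\min\{t_1,t_2\}$ on these finitely many extremal sequences.

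Second, I record two bookkeeping identities. A direct computation turns the definition of $\delta_r$ and Remark~\ref{condition} into the clean formula $\delta_r(m,n)=r+g-\phi$, where $\phi:=(m-1-a)(n-1-b)$ is the maximal value from Remark~\ref{condition}; equivalently $(a+1)(b+1)=an+bm+\phi-g+1$. On an extremal sequence with exactly $k$ entries equal to $A+1$ and, when $A\leq m-2$, the remaining entries non-positive, one finds $t_1=k(A+1)$ and $t_2=kA+B+1$, so that $\min\{t_1,t_2\}=kA+\min\{k,B+1\}$ and the splitting into the two cases $k\leq B+1$ and $k\geq B+1$ is governed by the sign of $t_1-t_2=k-B-1$. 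The sum condition in $(*)$ pins down $k$ via $k(m-1)=n(a-A)+m(b+1)-h-B-2$, and the mere existence of a feasible $\alpha$ forces the sandwich $n(A+2-m)\leq\sum_i\alpha_i\leq n(A+1)$, which I translate into bounds on $nA+B$ in terms of $r$, $g$, $\phi$ and $m$.

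The decisive observation is that an extremal sequence with $t_1=k(A+1)$ exhibits the candidate triple $(a',b',h'):=(k-1,\,A,\,k(A+1)-1-r)$. If $t_1>r$ then $h'\geq 0$, and, after checking the range constraints $0\leq a'\leq m-1$ and $0\leq b'\leq n-1$ via the feasibility sandwich, the triple lies in $I_r$. Minimality of $\delta_r$, i.e. maximality of $\phi$ in Remark~\ref{condition}, then reads $\phi\geq(m-k)(n-1-A)=(m-1-a')(n-1-b')$, and I plan to combine this with the value of $k$ from the sum condition to contradict $t_1>r$; the case $k\geq B+1$, bounding $t_2$ instead, is handled by the symmetric triple. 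The small-$A$ and all-positive ($A\geq m-1$) regimes, where the intermediate entry or the sign of $A+2-m$ changes the formulas for $t_1,t_2$, are treated separately by the same method.

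The main obstacle is precisely this last step: matching the extremal data $(k,A,B)$ to an admissible triple in $I_r$ and squeezing the sum condition, the feasibility sandwich and the maximality of $\phi$ together into a contradiction, all while correctly accounting for the single intermediate entry and verifying the range constraint $b'=A\leq n-1$ (which is \emph{not} automatic and, when it fails, must be routed through $t_2$ or the all-positive regime). Everything else, namely the convexity reduction and the two identities, is routine; the arithmetic of the closing case analysis is where the real work lies.
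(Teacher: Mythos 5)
Your first step is sound, and it is essentially the paper's own first step in different clothing: the exchange move $(\alpha_i,\alpha_j)\mapsto(\alpha_i+1,\alpha_j-1)$ weakly increases both $t_1$ and $t_2$ and terminates, so it suffices to bound the extremal sequences, which are exactly the sequences $\beta^{(p,q)}$ of (\ref{sequence}); this is a clean (arguably cleaner) substitute for Lemma~\ref{worstcase}. Your identity $\delta_r(m,n)=r+g-\phi$ with $\phi=(m-a-1)(n-b-1)$ is also correct and is implicitly what the paper computes, and your Euclidean-division determination of $k$ agrees with (\ref{pandq}). Lemma~\ref{lem_easycase} covers your ``all-positive'' regime in the same way you indicate.

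The genuine gap is that the proof stops exactly where the paper's real work begins, and you concede as much: the closing case analysis is asserted as a plan, not carried out. Three concrete problems would surface if you tried to execute it as written. First, your formulas $t_1=k(A+1)$ and $t_2=kA+B+1$ are valid only when the intermediate entry $q$ is non-positive; in the cases $q>0$ (the two cases the paper actually works through) one has $t_1=(A+1)p+q$ and $t_2=Ap+q+B$, and the extra $q$ must be absorbed using $0\leq q-(A+2-m)\leq m-2$, which is precisely how the paper obtains the inequalities (\ref{ineq1}) and (\ref{ineq2}); even your case-splitting criterion $t_1-t_2=k-B-1$ is off in this regime, where $t_1-t_2=p-B$. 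Second, your candidate triple $(a',b',h')=(k-1,\,A,\,k(A+1)-1-r)$ is transposed relative to the one the computation actually produces: with it, invoking the maximality in Remark~\ref{condition} requires the range constraints $k\leq m$ and $A\leq n-1$, neither of which is automatic --- you flag the second but offer no resolution, and you miss the first entirely (when $n>m$, $p$ can exceed $m$). The paper's orientation, comparing $\phi$ with $(m-A-1)(n-p-1)$, i.e.\ the pair $(A,p)$ with $A$ in the $a$-slot and $p$ in the $b$-slot, makes both constraints free: $A\leq m-2$ holds by the standing assumption $A+2-m\leq 0$, and $p\leq n-1$ holds by construction, so no extra regime-splitting is needed. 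This orientation is not optional: the computation in Case~(\ref{case1}) yields the exact identity $r-t_2=(m-a-1)(n-b-1)-(m-A-1)(n-p-1)$, so the assumed failure $r-t_2<0$ is literally the statement $\phi<(m-A-1)(n-p-1)$, which combines with (\ref{pandq'}) to produce $(A+1)(p+1)-1-r\geq 0$ and hence a contradiction with maximality; with your transposed pair, maximality gives only $\phi\geq (m-k)(n-1-A)$, which bounds $\delta_r$ from above but does not by itself contradict $t_1>r$, so the ``squeezing'' you defer is not routine bookkeeping but the crux. Third, the correct division into cases is by the sign of $q$ and the comparison of $B$ with $p$ (bounding $t_2$ when $B<p$ and $t_1$ when $B\geq p$), and each of the four cases needs its own inequality chain; none of these derivations appears in your proposal, so as it stands you have proved the reduction to extremal sequences but not Claim~\ref{claim}.
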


\begin{lem} \label{lem_easycase} If $A+2-m >0$, then Claim \ref{claim} holds.
\end{lem}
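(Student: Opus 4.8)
The plan is to take advantage of the hypothesis $A+2-m>0$ by estimating $t_2$ rather than $t_1$. First I would observe that condition $(*)$ forces $\alpha_i \geq A+2-m \geq 1$ for every $i$, so that the whole sequence is strictly positive. The payoff is that each truncation in the definition of $t_2$ becomes inert: since $\alpha_i \geq 1$ we have $(\alpha_i-1)^+ = \alpha_i - 1 \geq 0$ for all $i$, so the ``$+$'' is doing no work at all in $t_2 = B+1+\sum_i (\alpha_i-1)^+$.

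Granting this, the second step is a bare substitution. Writing
$$t_2 = B + 1 + \sum_{i=1}^n (\alpha_i - 1) = B + 1 - n + \sum_{i=1}^n \alpha_i$$
and plugging in the value of $\sum_i \alpha_i$ prescribed by $(*)$, the parameter $B$ cancels and one is left with a closed expression in $m,n,a,b,h$ alone, namely $t_2 = an + bm - h + m + n - nm - 1$.

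The third step is the comparison with $r = (a+1)(b+1) - 1 - h$. Expanding both sides, the $h$-contributions cancel and the difference collapses to
$$t_2 - r = -(m-a-1)(n-b-1).$$
Since $a \leq m-1$ and $b \leq n-1$, both factors are nonnegative, whence $t_2 - r \leq 0$, i.e.\ $t_2 \leq r$. This at once yields $\min\{t_1,t_2\} \leq t_2 \leq r$, which is the assertion of Claim \ref{claim}.

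The one thing to get right is the initial decision to bound $t_2$ instead of $t_1$: the analogous computation for $t_1 = \sum_i \alpha_i^+ = \sum_i \alpha_i$ gives $t_1 - r = (n-1) - (m-a-1)(n-b-1) - B$, whose sign is not manifest. I do not expect a genuine obstacle in this regime, precisely because positivity of all the $\alpha_i$ removes the truncations; note in particular that neither the maximality of $(a,b)$ nor the bound $B \leq n-1$ is actually used here. The real difficulty of Claim \ref{claim} is deferred to the complementary range $A+2-m \leq 0$, where some $\alpha_i$ may be negative and the truncations $\alpha_i^+$ and $(\alpha_i-1)^+$ genuinely interact with the maximality hypothesis on $(a,b)$.
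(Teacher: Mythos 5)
Your proof is correct and follows essentially the same route as the paper: under $A+2-m>0$ the truncations in $t_2$ are inert, substituting the sum formula from $(*)$ gives $t_2 = an+bm-h+m+n-nm-1$, and the difference factors as $r-t_2=(m-a-1)(n-b-1)\geq 0$. (Your factorization is the correct one; the paper's displayed $(n-a-1)(n-b-1)$ is a typo for $(m-a-1)(n-b-1)$, and your aside that neither the maximality of $(a,b)$ nor $B\leq n-1$ is needed in this regime matches the paper's argument.)
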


\begin{proof} Remark that $(\alpha_i -1)^+ = \alpha_i -1$ for every $i$. We are going to show that $t_2 \leq r$. First, let's compute $t_2$:
\begin{equation*}
t_2 = \sum_{i=1}^n (\alpha_i -1)^+ + B + 1 = \sum_{i=1}^n \alpha_i - n + B + 1 = an + bm - h +m + n -nm - 1
\end{equation*}
So we have that
\begin{align*}
r-t_2 &= \left((a+1)(b+1) - h -1\right) - \left(an + bm - h +m + n -nm - 1\right) \\
%& = (a+1)(b+1)- an - bm -m - n + nm \\
& = (n-a-1)(n-b-1).
\end{align*}
By hypothesis $a\leq m-1$ and $b\leq n-1$, hence $r-t_2 \geq 0$.
\end{proof}

From now on, we add the hypothesis $A+2-m \leq 0$. To prove the claim, we will proceed as follows: first we introduce a specific integer sequence, which we show to be the ``worst-case scenario". Afterwards, we prove the claim for this particular sequence. \\

For each $p \in \{1, \ldots, n\}$ and $q \in \{A+2-m, \ldots, A\}$, we define the sequence $\beta^{(p,q)} = \big(\beta_i \big)$ as follows:
\begin{equation} \tag{$\triangle$}  \label{sequence}  \beta_1 = \cdots = \beta_p = A+1, \ \ \ \beta_{p+1} = q, \ \ \ \beta_{p+2}= \ldots =\beta_{n} = A+2-m.
\end{equation}
We want that the sequence $\beta^{(p,q)}$ satisfies the same conditions $(*)$ as the sequence $\alpha$. In particular, we need that
\[\sum_{i=1}^n \beta_i = an + bm - h +m + 2n -nm -B -2.
\]
This equation allows us to compute $p$ and $q$. Indeed, since
\begin{align*}
\sum_{i=1}^n \beta_i &= (A+1)p + q + (A+2-m)(n-p-1) \\
&= p(m-1) + \big( q - (A+2-m)\big) + n (A+2-m),
\end{align*}
it follows that
\begin{equation} \tag{$\Diamond$} \label{pandq} p(m-1) + \big( q - (A+2-m)\big) = an + bm - h + m -An -B -2. \end{equation}
Since $0\leq q -(A+2-m) \leq m-2$, the parameters $p$ and $q$ are uniquely determined by Euclidean division of the right hand side of the equation \eqref{pandq} by $m-1$.

\begin{lem} \label{worstcase} Assume that $A+2-m\leq 0$ and
%Let $a,b,h,m,n,A,B$ be integers such that $$0\leq a\leq m-1\ ,\ 0\leq b\leq n-1\ ,\ h\geq 0 \ ,\ 0\leq B\leq n-1.$$
let $\alpha = (\alpha)_{i=1, \ldots, n}$ be a sequence of integer numbers that satisfies the conditions $(*)$.
If $p,q$ are the integers such that the sequence $\beta^{(p,q)}$ satisfies $\sum_{i=1}^n \beta_i = \sum_{i=1}^n \alpha_i$, then
\[ \sum_{i=1}^n \alpha_i^+ \leq \sum_{i=1}^n \beta_i^+ \ \textrm{ and } \ \sum_{i=1}^n (\alpha_i-1)^+ \leq \sum_{i=1}^n (\beta_i-1)^+ \]
\end{lem}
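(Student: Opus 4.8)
The plan is to recognize Lemma \ref{worstcase} as an instance of the classical majorization principle: among all integer sequences confined to the interval $[A+2-m,\,A+1]$ with a prescribed sum, the sequence $\beta^{(p,q)}$ is the one that \emph{majorizes} all others, and both maps $x\mapsto x^+$ and $x\mapsto(x-1)^+$ are convex, so the Hardy--Littlewood--P\'olya (Karamata) inequality delivers the two desired inequalities at once.

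First I would set $U:=A+1$ and $L:=A+2-m$, so that every entry of $\alpha$ and of $\beta^{(p,q)}$ lies in $[L,U]$, and write $S:=\sum_{i=1}^n\alpha_i=\sum_{i=1}^n\beta_i$. Note that $\beta^{(p,q)}$ is already listed in non-increasing order, since $L\leq q\leq U$. The core step is to show that $\beta^{(p,q)}$ majorizes $\alpha$: after sorting $\alpha$ as $\alpha_{(1)}\geq\cdots\geq\alpha_{(n)}$, I must verify $\sum_{i\leq k}\alpha_{(i)}\leq\sum_{i\leq k}\beta_i$ for every $k\in\{1,\ldots,n\}$, the case $k=n$ being automatic from the equal sums. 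I would split into two ranges. For $k\leq p$, each of the $k$ largest entries of $\alpha$ is at most $U$, so $\sum_{i\leq k}\alpha_{(i)}\leq kU=\sum_{i\leq k}\beta_i$. For $k\geq p+1$, I instead bound the tail from below: the $n-k$ smallest entries of $\alpha$ are each at least $L$, hence $\sum_{i\leq k}\alpha_{(i)}=S-\sum_{i>k}\alpha_{(i)}\leq S-(n-k)L$; and since the positions $p+2,\ldots,n$ of $\beta^{(p,q)}$ all equal $L$, this same quantity equals $\sum_{i\leq k}\beta_i$. These two ranges exhaust all $k$, establishing the majorization.

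With the majorization in hand, I would invoke Karamata's inequality: for any convex $f\colon\RR\to\RR$, if $\beta$ majorizes $\alpha$ then $\sum_i f(\alpha_i)\leq\sum_i f(\beta_i)$. Applying this with $f(x)=x^+$ and with $f(x)=(x-1)^+$, both of which are convex, yields precisely $\sum_i\alpha_i^+\leq\sum_i\beta_i^+$ and $\sum_i(\alpha_i-1)^+\leq\sum_i(\beta_i-1)^+$, which is the assertion of the lemma.

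I expect the only genuine work to be the partial-sum verification in the second paragraph; the convexity observation and the appeal to Karamata are immediate. The one point to watch is the degenerate configurations (for instance $p=n$, or $q$ coinciding with $L$ or $U$), in which $\beta^{(p,q)}$ collapses to fewer distinct values. In each such case it is still non-increasing with all entries in $[L,U]$, so the two-range argument above applies verbatim and no separate treatment is required.
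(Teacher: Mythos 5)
Your proof is correct, but it takes a genuinely different route from the paper. You establish that $\beta^{(p,q)}$ majorizes $\alpha$ (the two-range partial-sum check is sound: for $k\leq p$ you bound the top entries by $A+1$, and for $k\geq p+1$ you bound the tail below by $A+2-m$, using that $\beta_{p+2}=\cdots=\beta_n=A+2-m$ and that $\beta^{(p,q)}$ is non-increasing since $A+2-m\leq q\leq A$), and then invoke Karamata's inequality for the convex functions $x\mapsto x^+$ and $x\mapsto (x-1)^+$. The paper instead argues directly by contradiction, and only for the two specific functions at hand: if $\sum_i\alpha_i^+>\sum_i\beta_i^+$, then $\alpha$ must have at least $p+1$ positive entries (otherwise $\sum_i\alpha_i^+\leq p(A+1)\leq\sum_i\beta_i^+$), whence
\[
\sum_{i=1}^n \alpha_i\ \geq\ \sum_{i=1}^n \alpha_i^+ +(n-p-1)(A+2-m)\ >\ \sum_{i=1}^n \beta_i^+ +(n-p-1)(A+2-m)\ \geq\ \sum_{i=1}^n \beta_i,
\]
contradicting the equality of sums; the shifted inequality is handled analogously. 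In substance the paper's computation is a hand-rolled special case of exactly the majorization phenomenon you identify, so your argument is the more conceptual one: it proves the inequality for \emph{every} convex $f$ at once and treats both target functions uniformly, at the cost of citing the Hardy--Littlewood--P\'olya machinery, whereas the paper's version is self-contained and elementary. Your closing remark on degenerate configurations is also apt; the only caveat worth noting is that when $p=n$ the parametrization $(\triangle)$ formally has no entry $\beta_{p+1}$, a boundary bookkeeping issue already present in the paper, and as you say the partial-sum argument goes through unchanged there.
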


\begin{proof}
If $\sum_{i=1}^n \alpha_i^+ > \sum_{i=1}^n \beta_i^+$, then $\sharp\{i\,|\,\alpha_i>0\}\geq p+1$ since $\beta_1,\ldots,\beta_p$ reach the maximal value $A+1>0$. This implies that $$\sum_{i=1}^n \alpha_i\geq \sum_{i=1}^n \alpha_i^+ + (n-p-1)(A+2-m)>\sum_{i=1}^n \beta_i^+ + (n-p-1)(A+2-m)\geq \sum_{i=1}^n \beta_i,$$
a contradiction. Similarly, we can see that $$\sum_{i=1}^n (\alpha_i-1)^+ \leq \sum_{i=1}^n (\beta_i-1)^+.$$
\end{proof}

Now we are able to prove Claim \ref{claim}.

\begin{proof} Because of Lemma \ref{worstcase}, the sequence $\beta^{(p,q)}$ defined by (\ref{sequence}) maximizes $\sum_{i=1}^n \beta_i^+$ and $\sum_{i=1}^n (\beta_i -1)^+$. Therefore it is enough to check our claim for this kind of sequence. Recall that we may assume that $A+2-m \leq 0$ by Lemma \ref{lem_easycase}. We distinguish the following four cases:
\begin{enumerate}
\item $q>0$ and $B<p$; \label{case1}
\item $q>0$ and $B \geq p$; \label{case2}
\item $q \leq 0$ and $B<p$; \label{case3}
\item $q \leq 0$ and $B \geq p$. \label{case4}
\end{enumerate}

We will only handle the cases (\ref{case1}) and (\ref{case2}); the cases (\ref{case3}) and (\ref{case4}) can be treated in a similar way.
Below, we use the following equality which directly follows from (\ref{pandq}):
\begin{gather} %\tag{$\Diamond_1$} \label{pandq1} (an + bm -h-2+m-B-An) - \big(p(m-1) + (q-(A+2-m)\big) = 0, \\
\tag{$\Diamond'$} \label{pandq'} an + bm - h = An + pm + \big(q - (A+2-m) - (m-2) \big) + (B-p).
\end{gather}

\vspace{\baselineskip}
{\bf Case (\ref{case1})}. We are going to show that $r-t_2 \geq 0$. Since
\[t_2 = \sum_{i=1}^n (\beta_i -1)^+ + B + 1 = Ap + (q-1) + B + 1 = Ap + q + B. \]
and by using (\ref{pandq}), we obtain that
$$r-t_2 = (m-a-1)(n-b-1) - (m-A-1)(n-p-1).$$
%\begin{align*}
%r-t_2 &= ((a+1)(b+1)-h-1) - (Ap - an - bm + An + pm -p - A \\
%& = (m-a-1)(n-b-1) - (m-A-1)(n-p-1).
%\end{align*}
Moreover, from (\ref{pandq'}), by using that $p<B$ and $q- (A+2-m) \leq m-2$, we get that
\begin{equation} \label{ineq1} an + bm - h \leq pm + An -1. \end{equation}
Now suppose that $r-t_2 <0$, which is equivalent to
\begin{equation} \label{ineq1b} (m-a-1)(n-b-1) < (m-A-1)(n-p-1). %\\
%-an-bm+ab+a+b &< -An-pm +Ap +A+p.
\end{equation}
By adding the inequalities (\ref{ineq1}) and (\ref{ineq1b}), we have that
$$ (a+1)(b+1) - h - 1 \leq (A+1)(p+1) -3, $$
hence $(A+1)(p+1)-1-r \geq 0$. Now Remark \ref{condition} implies
\[
(m-a-1)(n-b-1) \geq (m-A-1)(n-p-1),
\]
which contradicts our assumption.

\vspace{\baselineskip}
{\bf Case (\ref{case2})}. In this case, we want to show that $r-t_1 \geq 0$. Therefore, we first compute $t_1$:
\[t_1 = \sum_{i=1}^n \beta_i^+ = (A+1)p + q. \]
Using (\ref{pandq}), we find that
\[r-t_1 = (m-a-1)(n-b-1) - (m-A-1)(n-p-1) + B-p.\]
From (\ref{pandq'}), by using that  $q- (A+2-m) \leq m-2$, we obtain
\begin{equation} \label{ineq2} an + bm - h \leq p(m-1) + An +B. \end{equation}
Suppose that $r-t_1<0$, which means
\begin{equation} \label{ineq2b}
(m-a-1)(n-b-1) < (m-A-1)(n-p-1) - (B-p) %\\
%-an-bm+ab+a+b &< -An-pm +Ap +A+2p-B.
\end{equation}
By adding the inequalities (\ref{ineq2}) and (\ref{ineq2b}), we obtain that
$$ %ab + a +b +1 - h & < Ap + A + p +2 \\
(a+1)(b+1) - h - 1 \leq (A+1)(p+1) -1, $$
thus $(A+1)(p+1)-1-r \geq 0$. By Remark \ref{condition}, this implies
\[
(m-a-1)(n-b-1) \geq (m-A-1)(n-p-1),
\]
which contradicts our assumption since $B\geq p$.

\end{proof}

\begin{cor} \label{cor_generic}
Let $C$ be a generic smooth curve of bidegree $(m,n)$ on $\mathbb{P}^1\times\mathbb{P}^1$. Then $d_r(C)=\delta_r(m,n)$ for all
$0<r<g(C)=(m-1)(n-1)$.
\end{cor}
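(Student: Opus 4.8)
The goal is to establish Theorem \ref{thm_main} for generic smooth curves of bidegree $(m,n)$ by combining the tropical machinery developed in the previous sections with Baker's Specialization Lemma \cite[Lemma 2.8]{Bak}. The overall strategy is a standard degeneration argument: the upper bound $d_r(C) \leq \delta_r(m,n)$ already holds for \emph{every} smooth bidegree-$(m,n)$ curve by Lemma \ref{lem_upperbound}, so the content of the corollary is the matching lower bound $d_r(C) \geq \delta_r(m,n)$, which we extract from the tropical computation of Theorem \ref{thm_Kmn}.

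The plan is as follows. First I would exhibit a suitable degeneration: take a one-parameter family of bidegree-$(m,n)$ curves over a discrete valuation ring (e.g.\ $k[[t]]$) whose generic fiber is a smooth curve and whose special fiber degenerates to a union of $m+n$ lines, namely $m$ lines from the ruling $R_1$ and $n$ lines from $R_2$. The dual graph of this totally degenerate special fiber is exactly the complete bipartite graph $K_{m,n}$, where each of the $mn$ nodes corresponds to an intersection point of one $R_1$-line with one $R_2$-line. Choosing the family so that each component is a $\PP^1$ meeting the others transversally in the prescribed nodes, the associated metric graph (with all edge lengths one, after the standard semistable model) is precisely the metric graph $K_{m,n}$ studied in Section 4. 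Second, I would invoke Baker's Specialization Lemma, which asserts that for a divisor $\mathcal{D}$ on the generic fiber $C$, the rank of its specialization $\tau(\mathcal{D})$ to the metric graph $K_{m,n}$ satisfies $\rk_{K_{m,n}}(\tau(\mathcal{D})) \geq \rk_C(\mathcal{D})$.

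The lower bound then follows by contraposition. Suppose, for some $0 < r < g$, that the generic curve $C$ admitted a divisor $D$ of degree $d < \delta_r(m,n)$ with $\rk_C(D) \geq r$; this would witness $d_r(C) < \delta_r(m,n)$. Specializing $D$ to $K_{m,n}$ yields a divisor of the same degree $d$ whose rank, by the Specialization Lemma, is still at least $r$. But then $d_r(K_{m,n}) \leq d < \delta_r(m,n)$, contradicting Theorem \ref{thm_Kmn}, which asserts $d_r(K_{m,n}) = \delta_r(m,n)$. Hence no such $D$ exists, giving $d_r(C) \geq \delta_r(m,n)$ for the generic curve, and combined with Lemma \ref{lem_upperbound} we conclude $d_r(C) = \delta_r(m,n)$ for all $0 < r < g$. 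Here it is essential that the statement is about the \emph{generic} curve: the Specialization Lemma lower-bounds the gonality-sequence entries only for the generic member of the family, since ranks can only go up under specialization, and one uses semicontinuity of $d_r$ in the family to transfer the bound to the generic fiber.

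The main obstacle I expect is the construction and verification of the degeneration itself, rather than the formal rank inequality. One must produce a regular (or at least semistable) total space whose special fiber is the nodal union of $m+n$ lines with dual graph $K_{m,n}$ and all edge lengths equal to one, so that the metric graph appearing in Baker's Lemma is exactly the $K_{m,n}$ of Theorem \ref{thm_Kmn}; this requires checking that a generic pencil of such curves realizes this combinatorial type and that the arithmetic surface is smoothable with the correct reduction. A secondary point requiring care is ensuring the degeneration can be arranged so that the specialization map is defined and the edge lengths are genuinely uniform, which is what allows us to cite Theorem \ref{thm_Kmn} verbatim. Once the family is in place, the remaining argument is purely formal.
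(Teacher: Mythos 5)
Your proposal is correct and follows essentially the same route as the paper: degenerate to a union of $m$ lines in $R_1$ and $n$ lines in $R_2$ with dual graph the unit-length metric $K_{m,n}$, apply Baker's Specialization Lemma together with Theorem \ref{thm_Kmn} to get the lower bound $d_r(K_{m,n}) \leq d_r(C)$ via semicontinuity of the gonality sequence \cite[Proposition 3.4]{LM}, and combine with Lemma \ref{lem_upperbound}. The construction you flag as the main obstacle is handled in the paper by the explicit pencil $t \cdot f + \prod_{i=1}^m \ell_{1,i} \prod_{j=1}^n \ell_{2,j}$ over $k[[t]]$, whose special fiber is visibly the required nodal union of lines.
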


\begin{proof}
As in Remark \ref{rem_phi}, consider a bivariate polynomial $$f = \sum_{i=0}^m \sum_{j=0}^n a_{i,j} x^i y^j\in k[x,y]$$ defining a smooth curve of bidegree $(m,n)$. Moreover, consider distinct lines $L_{1,i}\in R_1$ with $i\in\{1,\ldots,m\}$, defined by $\ell_{1,i}:=x-x_i=0$, and $L_{2,j}\in R_2$ with $j\in\{1,\ldots,n\}$, defined by $\ell_{2,j}:=y-y_j=0$.
The equation
\[ t \cdot f + \prod_{i=1}^m  \ell_{1,i} \prod_{j=1}^n \ell_{2,j}\in k[[t]][x,y] \]
defines a $1$-parameter family of smooth curves of bidegree $(m,n)$. Its generic fiber is a smooth curve over $k((t))$. The family degenerates to the union of the $m+n$ lines $L_{1,i}$ and $L_{2,j}$, whose dual graph is the complete bipartite graph $K_{m,n}$ with edge lengths
$l(e)=1$. By Baker's Specialization Lemma \cite[Lemma 2.8]{Bak}, we know that linear systems on the generic fiber of the family specialize to linear systems on the graph $K_{m,n}$, and that the rank can only increase under specialization.
Therefore, we obtain the following inequality for a generic bidegree-$(m,n)$ curve $C$ over $k$ (by semi-continuity of the gonality sequence \cite[Proposition 3.4]{LM}):
$$d_r(K_{m,n}) \leq  d_r(C).$$
By Theorem \ref{thm_Kmn}, the left hand side of this inequality equals $\delta_r(m,n)$. On the other hand, by Lemma \ref{lem_upperbound}, the right hand side is at most $\delta_r(m,n)$, so the statement follows.
\end{proof}

\section{Sharpness for smooth curves on $\PP^1 \times \PP^1$}

In this last section, we prove Theorem \ref{thm_main}.
We start with a brief outline of the argument that Hartshorne \cite{Har} used to compute the gonality sequence of plane curves (see Theorem \ref{thm_planecurves}).  We then adapt his argument to curves on $\mathbb{P}^1\times \mathbb{P}^1$.

Consider a smooth plane curve $C\subset \mathbb{P}^2$. Hartshorne's proof proceeds by induction on the degree $d$ of $C$. If a divisor $D$ on $C$ is non-special, then its rank can be computed via Riemann-Roch, namely $\rk_C(D)=g+r$ with $g=\frac{(d-1)(d-2)}{2}$. If instead the divisor $D$ is special, then it is contained in a plane curve $C'$ of degree $d-3$, and one can derive a formula for its rank as a divisor on $C$ in terms of its rank as a divisor on $C'$. The main issue with this argument is that there is no guarantee that the curve $C'$ must be smooth. For this reason, Hartshorne developed the theory of \emph{generalized divisors} on Gorenstein curves, see \cite[Section 1]{Har} for further details. This approach has a secondary advantage: if one restricts to generalized divisors, one does not need to impose the smoothness condition on the curve $C$.  In fact, Hartshorne only assumes that $C$ is irreducible, see \cite[Theorem 2.1]{Har}.

\begin{exam}
Let $C$ be an irreducible plane curve of degree $d\geq 3$ with a node at $P$. Then the projection map $f:C\to \mathbb{P}^1$ from $P$ is non-degenerate and rational of degree
$d-2$. However, the corresponding divisor $D=H\cap C-2P$, where $H\subset \mathbb{P}^2$ is a line, is not a generalized divisor (see \cite[Example 1.6.1]{Har}). In fact, Hartshorne shows that $C$ still satisfies $d_1(C)=d-1$.
\end{exam}

A natural question is whether Hartshorne's argument can be adapted to curves on some other surface $S$.  Note that, if $S$ is smooth, then any curve on $S$ is Gorenstein.  If any multiple of the canonical bundle $K_S$ is effective, then the inductive procedure will not terminate, so we should assume that $S$ is rational or ruled. This in itself is not much of a restriction: indeed, any curve can be embedded in a rational surface.
However, at a crucial step in our argument we use the fact that, for any two effective curve classes $C$ and $F$ on $S$, the restriction $F\vert_C$ is effective. The only rational surfaces with this property are $\PP^2$ and $\PP^1 \times \PP^1$.

Throughout, if $D$ is a divisor on a curve $C$ on $S=\PP^1 \times \PP^1$, we write $D+(x,y)$ for the divisor class $D + \OO_C(x,y)$. Our main result is the following.

\begin{thm}
\label{thm:mainthm}
Let $C$ be an irreducible curve of bidegree $(m,n)$ on $S = \PP^1 \times \PP^1$.  Let $D$ be a (generalized) divisor on $C$ of rank $r \geq 0$ and degree $d \leq 2(mn-m-n)$.  Then we have that
\[
\deg (D) \geq \min\{am + bn - h\,|\,(a,b,h) \in I_r\}=\delta_r(m,n).
\]
Moreover, if $D$ is of the form $D = D'+(x,y)$ for some effective $D'$ and $x,y \geq 0$, then the minimum in this expression can be taken over all $(a,b,h) \in I_r$ such that $a \geq y$ and $b \geq x$.
%In particular, we have
%\[
%d_r (C) \geq \min_{(a,b,h) \in I_r} am +bn - h .
%\]
\end{thm}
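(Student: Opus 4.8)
The plan is to follow Hartshorne and argue by induction, reducing the bidegree $(m,n)$ to $(m-2,n-2)$, with the nonspecial and special divisors treated separately. If $D$ is nonspecial, then $h^1(C,D)=0$ and Riemann--Roch gives $\deg D=g+r$ with $g=(m-1)(n-1)$; since the triple $(m-1,n-1,mn-1-r)$ lies in $I_r$ and realizes the value $g+r$ in the defining minimum for $\delta_r(m,n)$, we get $\delta_r(m,n)\le g+r=\deg D$ and the bound holds. All the content is therefore in the special case $h^0(C,K_C-D)>0$. For the induction to close I would prove the \emph{refined} statement (with the constraints $a\ge y$, $b\ge x$) throughout, since it is exactly the stronger form that the recursion consumes: restricting the index set shrinks the feasible region and so records the ruling classes $(x,y)$ already visible in $D$. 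The base of the induction is $\min\{m,n\}\le 2$: for $m=1$ or $n=1$ the curve is rational and carries no special divisors, while bidegree $(2,n)$ and $(m,2)$ curves are hyperelliptic (the ruling cuts out a $g^1_2$), so the bound can be checked against the known gonality sequence of hyperelliptic curves.

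For the special case I would proceed as follows. Replacing $D$ by an effective representative, its specialty means it is contained in a canonical divisor of $C$; since $K_C=\OO_C(m-2,n-2)$, such a divisor is cut out by a curve $C'\subset S$ of bidegree $(m-2,n-2)$. Here I would invoke the defining feature of $\PP^1\times\PP^1$ (and $\PP^2$): the restriction to $C$ of every effective class on $S$ is effective, which both guarantees the existence of $C'$ and keeps the relevant restricted classes effective through the recursion. Crucially, $C'$ is a Cartier divisor on the smooth surface $S$, hence Gorenstein, so Hartshorne's generalized-divisor formalism applies even when $C'$ is singular or reducible; this is precisely why no smoothness hypothesis on the auxiliary curve is needed. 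Since $C$ is irreducible of larger bidegree, the intersection $X=C\cap C'$ is a zero-dimensional complete intersection in $S$ of length $C\cdot C'=2g-2$, and it links $D$ to its residual $D^{*}=X-D$, an effective generalized divisor supported on $C'$. I would then combine Serre duality for generalized divisors on $C$ with the residuation (liaison) exact sequence of the link $X$ to express $\rk_C(D)$ in terms of the rank of the transferred divisor on $C'$, its degree, and $g$, and finally apply the refined induction hypothesis on $C'$ when the transferred divisor is special there, or Riemann--Roch on $C'$ when it is nonspecial (the dichotomy being governed by whether its degree exceeds $2g(C')-2=2(m-3)(n-3)-2$, which is exactly what keeps the induction hypothesis applicable).

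The last step is pure but delicate bookkeeping: one must convert the lower bound produced on $C'$, of the shape $\delta_{r'}(m-2,n-2)$, into the desired bound $\delta_r(m,n)$ on $C$. Concretely, each optimal triple $(a',b',h')\in I_{r'}(m-2,n-2)$ has to be matched with a triple in $I_r(m,n)$, the index shift arising from the bidegree drop $(m,n)\to(m-2,n-2)$ and from the residuation $D\leftrightarrow D^{*}$; the constraints $a\ge y$, $b\ge x$ are what record which ruling classes the passage to $C'$ has peeled off, and they propagate the refinement faithfully. I would organize this translation through Remark \ref{condition}, rewriting the minimum defining $\delta_r$ as a maximum of the product $(m-a-1)(n-b-1)$, so that the comparison reduces to an inequality between such products, mirroring the combinatorial mechanism already used in the proof of Claim \ref{claim}. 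The main obstacle, then, is twofold: establishing the clean rank-comparison identity between $C$ and $C'$ inside the generalized-divisor/liaison framework, and carrying the $(x,y)$-refinement exactly through the recursion so that the extremal triples correspond. By comparison, the nonspecial case, the existence of $C'$, and the base cases are routine.
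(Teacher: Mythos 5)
Your high-level architecture matches the paper's (induction on bidegree, Riemann--Roch in the nonspecial case, Hartshorne's generalized divisors, and carrying the refined statement with $a \geq y$, $b \geq x$ through the recursion --- the paper indeed needs exactly that refinement, to get $a \geq f$ in the inductive step). But the core of your inductive step has a genuine gap: you propose to apply the induction hypothesis directly to the adjoint curve $C'$ of bidegree $(m-2,n-2)$ containing $D$, arguing that since $C'$ is Gorenstein, ``Hartshorne's generalized-divisor formalism applies even when $C'$ is singular or reducible.'' The formalism applies, but the \emph{theorem} does not: the statement being proved (like Hartshorne's Theorem 2.1) is for \emph{irreducible} curves only, and it is false for reducible ones --- the adjoint through $D$ may well be a union of rulings, whose rational components carry divisors of low degree and high rank, violating any Brill--Noether-type bound. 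Since the induction hypothesis cannot be invoked on such a $C'$, your recursion does not close. This is precisely the difficulty the paper's proof is engineered to avoid: it never uses the $(m-2,n-2)$ adjoint as the inductive curve. Instead it extracts the fixed component $F$ of bidegree $(e,f)$ from the linear system $\vert \II_D (m-2,n-2)\vert$, passes from $D$ to $D' = D \cup (C \cap F)$ and then to $D'' = D' - C \cap F$, shows $\vert \II_{D''}(m-e-2,n-f-2)\vert$ has no fixed component, and only then applies Bertini to obtain an \emph{irreducible} curve of bidegree $(m-1,n)$ (or $(m-1,n-1)$ when $e=0$) containing $D''$. The bidegree thus drops by $(1,0)$ or $(1,1)$ per step, not $(2,2)$, and the base case is $\min\{m,n\} \leq 1$, not your hyperelliptic case $\min\{m,n\} \leq 2$.

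A secondary gap: your ``combine Serre duality with the residuation exact sequence'' step is exactly where the paper must prove a precise identity (Lemma \ref{Lem:InductiveStep}), namely
\begin{multline*}
h^0 \big(\LL_C (Z')\big) = (e+1)(f+1) - (m'+e-m+1)(n'+f-n+1) \\
+ h^0 \big(\LL_{C'} (Z+(m'+e-m,n'+f-n))\big),
\end{multline*}
valid only under the dichotomy that $Z'$ is special, and whose proof requires several nontrivial vanishings ($H^2(\II_Z(K_S+C-F))=0$ via $h^0(\OO_S(F-C))=0$, Kawamata--Viehweg for $h^1(\OO_S(F))$, and the case split $e=0$ versus $e\neq 0$ where $C'+F-C$ is effective or of bidegree $(-1,-1)$). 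Your liaison sketch with $X = C\cap C'$ and $D^* = X - D$ leaves this unproved, and moreover your degree bookkeeping is off: $\deg D^* = 2g-2-d$ need not satisfy the hypothesis $\deg \leq 2g(C')-2$ on the smaller curve, so even granting irreducibility the induction hypothesis may not apply to the residual. In short, the nonspecial case, the existence of adjoints through $D$ (Lemma \ref{Lem:Surj}), and the final triple-translation bookkeeping in your proposal are all sound and parallel to the paper, but the fixed-component extraction plus Bertini argument --- the mechanism that manufactures an irreducible inductive curve --- is missing, and without it the proof fails.
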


In other words, the divisors of smallest degree for a given rank are simply restrictions of line bundles from the ambient $\PP^1 \times \PP^1$, minus base points.

We start with the following observation.

\begin{lem}
\label{Lem:Surj}
Let $C$ be a curve on a smooth Fano surface $S$.  The restriction map
\[
H^0 (S, K_S + C) \to H^0 (C, K_C)
\]
is an isomorphism.
\end{lem}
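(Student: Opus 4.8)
The plan is to compare the two cohomology groups via the ideal-sheaf sequence of $C$ in $S$, twisted by $\mathcal{O}_S(K_S+C)$. First I would start from
\[
0 \to \mathcal{O}_S(-C) \to \mathcal{O}_S \to \mathcal{O}_C \to 0
\]
and tensor with the invertible sheaf $\mathcal{O}_S(K_S+C)$ to obtain
\[
0 \to \mathcal{O}_S(K_S) \to \mathcal{O}_S(K_S+C) \to \mathcal{O}_C\big((K_S+C)|_C\big) \to 0.
\]
By the adjunction formula $K_C=(K_S+C)|_C$ --- valid for the Gorenstein curve $C$ because $S$ is smooth --- the rightmost term is precisely $\mathcal{O}_C(K_C)$, and the map it induces on global sections is exactly the restriction map of the statement.

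Next I would pass to the associated long exact sequence in cohomology
\[
0 \to H^0(S,K_S) \to H^0(S,K_S+C) \to H^0(C,K_C) \to H^1(S,K_S) \to \cdots,
\]
so that the assertion reduces to the vanishing of the two flanking terms $H^0(S,K_S)$ and $H^1(S,K_S)$. This is where the Fano hypothesis enters: since $-K_S$ is ample, Kodaira vanishing gives $H^i(S,\mathcal{O}_S)=H^i\big(S,K_S+(-K_S)\big)=0$ for all $i>0$, and Serre duality $H^j(S,K_S)\cong H^{2-j}(S,\mathcal{O}_S)^{*}$ then yields $H^0(S,K_S)\cong H^2(S,\mathcal{O}_S)^{*}=0$ and $H^1(S,K_S)\cong H^1(S,\mathcal{O}_S)^{*}=0$. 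With both ends of the sequence zero, it collapses to the desired isomorphism.

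The argument is essentially formal once these two vanishings are in place, so I do not expect a serious obstacle. The one point deserving care is the identification of the restricted sheaf with $\mathcal{O}_C(K_C)$: because the inductive scheme behind Theorem \ref{thm:mainthm} forces us to allow $C$ to be singular (merely Gorenstein), I would invoke adjunction in the form $\omega_C\cong \omega_S\otimes\mathcal{O}_S(C)|_C$ for an effective Cartier divisor on a smooth surface, rather than the smooth-curve version, so that the isomorphism holds in the generality needed for the sharpness argument.
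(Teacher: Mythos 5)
Your proof is correct and takes essentially the same approach as the paper: the ideal-sheaf sequence of $C$ twisted by $\mathcal{O}_S(K_S+C)$, adjunction $K_C=(K_S+C)\vert_C$, and the vanishing of the flanking terms $H^0(S,K_S)$ and $H^1(S,K_S)$ in the long exact sequence. The only cosmetic difference is in how the vanishings are justified --- you use Kodaira vanishing plus Serre duality, while the paper gets $H^0(S,K_S)=0$ directly from anti-ampleness and $H^1(S,K_S)\cong H^1(S,\mathcal{O}_S)=0$ from simple connectedness of Fano varieties --- and your explicit remark that adjunction must be invoked in its Gorenstein (effective Cartier divisor) form is a sound point the paper leaves implicit.
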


\begin{proof}
Note that, by adjunction, $K_C = (K_S +C)\vert_C$.  Now, consider the long exact sequence on cohomology
\[
H^0 (S, K_S) \to H^0 (S, K_S +C) \to H^0 (C, K_C ) \to H^1 (S, K_S) .
\]
Since $K_S$ is anti-ample, $H^0 (S, K_S) = 0$.  By Serre duality, $H^1 (S, K_S ) \cong H^1 (S, \OO_S)$, and $H^1 (S, \OO_S) = 0$ because Fano varieties are simply connected.  It follows that the center arrow is an isomorphism.
\end{proof}

In other words, the canonical linear system on $C$ is cut out by restrictions of curves in the class $\vert K_S + C \vert$ to $C$.

We now prove an analogue of \cite[Lemma 2.2]{Har} for curves on $\PP^1 \times \PP^1$. This lemma, which allows us to compute the ranks of divisors on curves in terms of the ranks of related divisors on curves of smaller bidegree, provides the key step in our inductive argument. In what follows, when $X$ is clear from the context, we will use the notation $H^i(\mathcal{F})$ to indicate the cohomology space $H^i(X, \mathcal{F})$ of some sheaf $\mathcal{F}$. Similarly, the dimension $h^i(X, \mathcal{F})$ will be abbreviated to $h^i(\mathcal{F})$.

\begin{lem}
\label{Lem:InductiveStep}
Let $C$ be an irreducible curve of bidegree $(m,n)$ on $S = \PP^1 \times \PP^1$ and $Z$ a closed subscheme of finite length of $C$.  Let $F$ be an effective curve of bidegree $(e,f)$ with $e \geq f$ and $Z' = Z + C \cap F$.  Suppose that $Z$ is contained in an irreducible curve $C'$ of bidegree $(m',n')$ with
\begin{displaymath}
(m',n') = \left\{ \begin{array}{ll}
(m-1,n) & \textrm{if $e \neq 0$},\\
(m-1,n-1) & \textrm{if $e=0$}
\end{array} \right.
\end{displaymath}
Then either $Z'$ is non-special or

\begin{multline*}
h^0 \big(\LL_C (Z')\big) = (e+1)(f+1) - (m'+e-m+1)(n'+f-n+1) \\
+ h^0 \big(\LL_{C'} (Z+(m'+e-m,n'+f-n))\big) .
\end{multline*}
\end{lem}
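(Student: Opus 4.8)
The plan is to prove the stated dichotomy by assuming $Z'$ is \emph{special} (otherwise the first alternative holds and there is nothing to prove) and then establishing the displayed formula. Everything reduces to evaluating $h^0(\LL_C(K_C-Z'))$, since Riemann--Roch for generalized divisors on the Gorenstein curve $C$ gives
\[
h^0(\LL_C(Z')) = \deg(Z') + 1 - g(C) + h^0(\LL_C(K_C-Z')),
\]
and specialness guarantees the last term is positive, so the residuation below is non-vacuous.

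To compute $h^0(\LL_C(K_C-Z'))$, I read it as the space of effective canonical generalized divisors on $C$ dominating $Z'$. By adjunction $K_C=(K_S+C)\vert_C$ corresponds to $\OO_S(m-2,n-2)\vert_C$, and Lemma \ref{Lem:Surj} identifies $H^0(C,\omega_C)$ with $H^0(S,\OO_S(m-2,n-2))$; restricting this isomorphism to the subspace vanishing on $Z'$ shows that $h^0(\LL_C(K_C-Z'))$ equals $\dim\{G\in H^0(S,\OO_S(m-2,n-2)) : G\vert_C\geq Z'\}$. Since $Z'\geq C\cap F=F\vert_C$, every such $G$ satisfies $G\vert_C\geq F\vert_C$; comparing bidegrees forces the equation of forms $f_G=f_F\cdot f_{G'}$ on $S$ with $G'$ of bidegree $(m-2-e,n-2-f)$, and cancelling the Cartier divisor $F\vert_C$ turns $G\vert_C\geq Z'$ into $G'\vert_C\geq Z$. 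As $Z\subseteq C$, the condition $G'\vert_C\geq Z$ is equivalent to $G'\supseteq Z$ as subschemes of $S$, whence
\[
h^0(\LL_C(K_C-Z')) = h^0\big(S,\OO_S(m-2-e,n-2-f)\otimes\mathcal{I}_{Z/S}\big).
\]

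Next I transfer this count from $S$ to $C'$. Set $(u,v):=(m'+e-m,\,n'+f-n)$. Using $Z\subseteq C'$, the sequence $0\to\OO_S(-m',-n')\to\mathcal{I}_{Z/S}\to\mathcal{I}_{Z/C'}\to 0$, twisted by $\OO_S(m-2-e,n-2-f)$, has boundary term $\OO_S(m-2-e-m',\,n-2-f-n')$, whose $H^0$ and $H^1$ both vanish in the relevant bidegrees by K\"unneth on $\PP^1\times\PP^1$ (for instance the bidegree is $(-1-e,-2-f)$ when $e\neq 0$, and $(-1,-1)$ when $e=0$). Hence the $S$-count equals $h^0(C',\OO_{C'}(m-2-e,n-2-f)\otimes\mathcal{I}_{Z/C'})$. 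By adjunction on the irreducible, hence Gorenstein, curve $C'$ one has $\OO_{C'}(m-2-e,n-2-f)=\omega_{C'}(-u,-v)$, so this is $h^0(\LL_{C'}(K_{C'}-Z-(u,v)))$; applying Riemann--Roch for generalized divisors on $C'$ to $Z+(u,v)$ rewrites it as $h^0(\LL_{C'}(Z+(u,v)))-\deg(Z+(u,v))-1+g(C')$. Substituting into the first display, the statement follows once one checks the purely numerical identity $\deg(Z')-\deg(Z+(u,v))-g(C)+g(C')=(e+1)(f+1)-(u+1)(v+1)$, which I have verified directly using $\deg(C\cap F)=mf+ne$ in both the cases $e\neq 0$ and $e=0$.

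The cohomology vanishings on $\PP^1\times\PP^1$ are routine, so the genuine obstacle is the residuation argument of the second paragraph, which must be carried out entirely within Hartshorne's generalized-divisor formalism. Specifically, one needs that "canonical divisors dominating $Z'$" are faithfully read off global forms on $S$, that each such form necessarily splits off the factor $F$, and that cancelling $F\vert_C$ remains legitimate even when the supports of $Z$ and $C\cap F$ overlap or $Z'$ is non-reduced; this rests on handling the scheme structures and the duality $\LL_C(-Z')=\mathcal{I}_{Z'/C}$ precisely, so that each $h^0$ really counts what is claimed. Once the three central displayed identifications are justified, the remainder is arithmetic.
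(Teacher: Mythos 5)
Your argument is correct in substance, but it routes the computation differently from the paper, and the two are worth comparing. The paper applies Serre duality once, writing $h^0(\LL_C(Z'))=h^1\big(\II_{Z,C}(K_S+C-F)\big)$, and then chases two long exact sequences on $S$ coming from $0\to\OO_S(-C)\to\II_Z\to\II_{Z,C}\to 0$ and its analogue for $C'$: both $h^1(\II_{Z,C})$ and $h^1(\II_{Z,C'})$ are compared to the common middle term $h^1\big(\II_{Z/S}(K_S+C-F)\big)$, the correction terms $h^2\big(\OO_S(K_S-F)\big)$ and $h^2\big(\OO_S(K_S-F+C-C')\big)$ are converted by Serre duality into $h^0\big(\OO_S(F)\big)=(e+1)(f+1)$ and $h^0\big(\OO_S(F-(C-C'))\big)=(m'+e-m+1)(n'+f-n+1)$, and the non-special alternative is absorbed into the case where $F-K_S-C$ is effective. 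You instead work entirely on the $h^0$ side: you identify $h^0(\LL_C(K_C-Z'))$ with $h^0\big(S,\II_{Z/S}(m-2-e,n-2-f)\big)$ by a Noether-style residuation splitting off $f_F$, transfer to $C'$ with a single ideal-sheaf sequence and K\"unneth vanishing, and close the loop with Riemann--Roch for generalized divisors on both $C$ and $C'$. Note that your middle object is the Serre dual of the paper's, so the two proofs are dual bookkeeping schemes for the same comparison; but they are not interchangeable in flavor. Your version makes the formula's terms appear as explicit counts of bidegree forms, the vanishings you need ($H^0$ and $H^1$ of $\OO_S(-2-e,-2-f)$, and of $\OO_S(-1-e,-2-f)$ when $e\neq 0$ resp.\ $\OO_S(-1,-1)$ when $e=0$) are unconditional K\"unneth computations, and as a bonus your derivation never uses specialness of $Z'$: if $h^0(\LL_C(K_C-Z'))=0$ the same chain plus your numerical identity yields the displayed formula anyway, so the dichotomy in the statement becomes superfluous on your route. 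The paper's version, in exchange, never needs the factorization step, which is the one delicate point in yours. I confirm your numerical identity $\deg(Z')-\deg(Z+(u,v))-g(C)+g(C')=(e+1)(f+1)-(u+1)(v+1)$ in both cases, using $\deg(C\cap F)=mf+ne$ and, when $e=0$, that $e\geq f\geq 0$ forces $f=0$ and $(u,v)=(-1,-1)$.

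One sentence in your second paragraph is not yet a proof and deserves to be written out, as you yourself flag: ``comparing bidegrees forces $f_G=f_F\cdot f_{G'}$'' does not follow from $G\vert_C\geq F\vert_C$ by degree considerations alone --- divisibility of restrictions to $C$ does not imply divisibility of forms on $S$ (already on $\PP^2$, a conic through the two points where a line meets another conic gives a counterexample to the naive statement). The correct completion is: since $F\vert_C$ is a Cartier divisor on $C$ (here you use $C\not\subset F$, implicit in the lemma), one has $G\vert_C=(f_F\vert_C)\,s$ with $s\in H^0\big(C,\II_{Z,C}(m-2-e,n-2-f)\big)$; the restriction map $H^0\big(S,\OO_S(m-2-e,n-2-f)\big)\to H^0\big(C,\OO_C(m-2-e,n-2-f)\big)$ is bijective because $H^0$ and $H^1$ of $\OO_S(-2-e,-2-f)$ vanish by K\"unneth; lifting $s$ to $G'$, the form $f_G-f_Ff_{G'}$ vanishes on $C$, hence equals $f_C$ times a form of bidegree $(-2,-2)$, which is zero. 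This is exactly the routine vanishing you invoke elsewhere, and the multiplication map $s\mapsto f_F s$ respects the ideal sheaves $\II_{Z'}=\II_Z\cdot\OO_C(-F)$ in Hartshorne's formalism, so the scheme-theoretic worries you raise (overlapping supports, non-reduced $Z'$) are handled by the sheaf-level identity rather than pointwise reasoning. With that paragraph added, your proof is complete.
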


\begin{proof}
By Serre duality we have
\[
h^0 \big(\LL_C (Z')\big) = h^1 \big(\LL_C (K_C - Z')\big) = h^1 \big(\LL_C (K_C - Z - F)\big) = h^1 \big(\II_{Z,C} (K_C - F)\big) .
\]
By adjunction, this last term can be written as
\[
h^1 \big(\II_{Z,C} (K_C - F)\big) = h^1 \big(\II_{Z,C} (K_S +C-F)\big) .
\]
We have the exact sequence
\[
0 \to \OO_S (-C) \to \II_Z \to \II_{Z,C} \to 0,
\]
which yields an exact sequence on cohomology
\[
H^1 \big(\OO_S (K_S - F)\big) \to H^1 \big(\II_Z (K_S +C-F)\big) \to H^1 \big(\II_{Z,C} (K_S +C-F)\big) \to
\]
\[
\to H^2 \big(\OO_S (K_S - F)\big) \to H^2 \big(\II_Z (K_S +C-F)\big) \to 0 .
\]
Now, if $F \cap C-K_C$ is an effective divisor on $C$, then $\LL_C (Z')$ is non-special.  Note that every effective divisor on $S$ is basepoint free, and therefore if $F-K_S-C$ is an effective divisor on $S$, then $F \cap C-K_C$ is an effective divisor on $C$.  Hence we may assume that $F-K_S-C$ is non-effective on $S$.

We first show that $H^2 \big(\II_Z (K_S +C-F)\big) = 0$.  We have the exact sequence
\[
H^1 \big(\OO_Z (K_S +C-F)\big) \to H^2 \big(\II_Z (K_S +C-F)\big) \to
\]
\[
\to H^2 \big(\OO_S (K_S +C-F)\big) \to H^2 \big(\OO_Z (K_S +C-F)\big) .
\]
Because $Z$ is zero dimensional, both the left and right terms vanish, and we have
\[
H^2 \big(\II_Z (K_S +C-F)\big) \cong H^2 \big(\OO_S (K_S +C-F)\big).
  \]
By Serre duality,
\[
h^2 \big(\OO_S (K_S +C-F)\big) = h^0 \big(\OO_S (F-C)\big).
\]
Since $-K_S$ is ample, if $F-C$ is effective on $S$ then $F-K_S -C$ would be effective as well.  Since this is not the case, we see that $h^0 \big(\OO_S (F-C)\big) = 0$.

We now show that $H^1 \big(\OO_S (K_S - F)\big) = 0$.  By Serre duality,
\[
h^1 \big(\OO_S (K_S - F)\big) = h^1 \big(\OO_S (F)\big) .
\]
Since $-K_S$ is ample and $F$ is effective, $F-K_S$ is big.  Every big divisor on $S$ is nef, therefore we see that $h^1 \big(\OO_S (F)\big) = 0$ by Kawamata-Viewheg vanishing.

From this, we conclude that
\[
h^1 \big(\II_{Z,C} (K_S +C-F)\big) = h^1 \big(\II_Z (K_S +C-F)\big) + h^2 \big(\OO_S (K_S - F)\big) .
\]
We now run the same arguments on $C'$.  We see that, again,
\[
h^1 \big(\II_{Z,C'} (K_S + C-F)\big) = h^0 \big(\LL_{C'} (Z+F-(C-C'))\big) .
\]
and we have the exact sequence
\[
H^1 \big(\OO_S (K_S +C-C'-F)\big) \to H^1 \big(\II_Z (K_S +C-F)\big) \to H^1 \big(\II_{Z,C'} (K_S +C-F)\big) \to
\]
\[
\to H^2 \big(\OO_S (K_S +C-C'-F)\big) \to H^2 \big(\II_Z (K_S +C-F)\big) \to 0 .
\]
As above, we have $H^2 \big(\II_Z (K_S +C-F)\big)=0$.  To see that $H^1 \big(\OO_S (K_S +C-C'-F)\big) = 0$, note that if $e \neq 0$, then $C'+F-C$ is effective, and we obtain the vanishing for the same reason as above.  If $e=0$, then $C'+F-C$ has bidegree $(-1,-1)$, and $H^1 \big(\OO_S (-1,-1)\big) = 0$.  It follows that
\[
h^1 \big(\II_{Z,C'} (K_S +C-F)\big) = h^1 \big(\II_Z (K_S + C-F)\big) + h^2 \big(\OO_S (K_S - F + (C-C'))\big).
\]

Putting this all together, we see that

\noindent $h^0 (\LL_C (Z'))$
\begin{multline*}
 = h^0 \big(\LL_{C'} (Z+F-(C-C'))\big) + h^2 \big(\OO_S (K_S - F)\big) \\
- h^2 \big(\OO_S (K_S - F + (C-C'))\big)
\end{multline*}
\begin{multline*}
= h^0 \big(\LL_{C'} (Z+F-(C-C'))\big)  + h^0 \big(\OO_S (F)\big) - h^0 \big(\OO_S (F-(C-C'))\big)
\end{multline*}
\begin{multline*}
= (e+1)(f+1) - (m'+e-m+1)(n'+f-n+1) \\
+ h^0 \big(\LL_{C'} (Z+(m'+e-m,n'+f-n))\big).
\end{multline*}\\
\end{proof}

We now prove the main theorem.

\begin{proof}[Proof of Theorem~\ref{thm:mainthm}]
We prove this by induction on the bidegree $(m,n)$.  If either $m$ or $n$ is at most 1, then the result is trivial.  We henceforth assume that both $m$ and $n$ are at least 2, and the result holds for all curves of bidegree $(m',n')$ with $m' \leq m$, $n' \leq n$, not both equal.  %For ease of notation, we replace $Z + (x,y)$ by $Z$ throughout.

We may assume that $D$ is special.  By Lemma \ref{Lem:Surj} there exist curves, possibly reducible and singular, of bidegree $(m-2,n-2)$ containing $D$.  Let $\vert \II_D (m-2,n-2) \vert$ denote the linear system of all curves on $S$ of bidegree $(m-2,n-2)$ containing $D$.  Let the fixed component of this linear system be a curve $F$ of bidegree $(e,f)$.  Without loss of generality, we assume that $e \geq f$.  Let $D'$ be the scheme-theoretic union of $D$ and $C \cap F$.  Then $D'$ is a closed subscheme of $C$ of degree $d' \geq d$.  By construction, all curves of bidegree $(m-2,n-2)$ containing $D$ contain $D'$, so $h^0 \big( \II_D (m-2,n-2)\big) = h^0 \big( \II_{D'} (m-2,n-2)\big)$.  This implies that $h^1 \big( \LL_C (D)\big) = h^1 \big( \LL_C (D')\big)$.  By Riemann-Roch, therefore, $h^0 \big( \LL_C (D)\big) = h^0 \big( \LL_C (D')\big) - (d'-d)$.  It therefore suffices to bound $h^0 \big( \LL_C (D')\big)$.

Now let $D'' = D' - C \cap F$.  We have an exact sequence
\[
0 \to \II_{D''} (-e,-f) \to \II_{D'} \to \II_{C \cap F, F} \to 0 .
\]
Twisting and taking cohomology, this yields an exact sequence
\[
0 \to H^0 \big(\II_{D''} (m-e-2,n-f-2)\big) \to H^0 \big( \II_{D'} (m-2,n-2)\big)
\]
\[
\to H^0 \big( \II_{C \cap F, F} (m-2,n-2)\big) .
\]
Since $F$ is the fixed component of $\vert \II_{D'} (m-2,n-2) \vert$, the rightmost map is zero.  It follows that the linear system $ \vert \II_{D''} (m-e-2,n-f-2) \vert$ has no fixed component.  If $e \neq 0$, then adding arbitrary curves of bidegree $(e+1,f+2)$, we see that $\vert \II_{D''} (m-1,n) \vert$ has no fixed component.  By one of the Bertini theorems \cite[p.30]{Zar}, the general member of $\vert \II_{D''} (m-1,n) \vert$ is irreducible.  If $e=0$, then by a similar argument we see that $\vert \II_{D''} (m-1,n-1) \vert$ has no fixed component and its general member is irreducible.

We first consider the case where $e=0$.  By the above, $D''$ is contained in an irreducible curve $C'$ of bidegree $(m-1,n-1)$.  Then, by Lemma \ref{Lem:InductiveStep}, we have
\[
h^0 \big(\LL_C (D')\big) = h^0 \big(\LL_{C'} (D''+(-1,-1))\big)+1 .
\]
If $D''+(-1,-1)$ is non-effective, there is nothing to prove.  If $D''+(-1,-1)$ is effective and non-special on $C'$, then
\[
h^0 \big(\LL_{C'} (D''+(-1,-1))\big) = d'-m-n+2-(m-2)(n-2)+1,
\]
so
\[
h^0 \big(\LL_C (D')\big) = d'-m-n-(m-2)(n-2)+4 = d'-(m-1)(n-1)+1,
\]
so $D'$ is non-special on $C$.  Otherwise, if $D''+(-1,-1)$ is effective on $C'$, then by induction we have
\[
d'-m-n+2 = \deg \big(\LL_{C'} (D''+(-1,-1))\big) \geq \min_{(a,b,h) \in I_{r-1}} a(m-1) + b(n-1) - h .
\]
In other words, there exist constants $a,b$, and $h$ such that
\[
d' \geq a(m-1) + b(n-1) - h+m+n-2
\]
and
\[
(a+1)(b+1) - h = r.
\]
Letting $a'=a+1$, $b'=b+1$, and $h'=h+a+b+2$, we see that
\[
d' \geq a'm+b'n-h'
\]
and
\[
(a'+1)(b'+1) - h' = (a+1)(b+1)-h + 1 = r+1.
\]
Hence
\[
d' \geq \min_{(a,b,h) \in I_r} am + bn - h.
\]

We now consider the case where $e \neq 0$.  In this case, $D''$ is contained in an irreducible curve $C'$ of bidegree $(m-1,n)$.  By Lemma \ref{Lem:InductiveStep}, we have
\[
h^0 \big(\LL_C (D')\big) = h^0 \big(\LL_{C'} (D''+(e-1,f))\big) +f+1 .
\]
Note that $D''+(e-1,f)$ is effective, since $e>0$.  If $D''+(e-1,f)$ is non-special on $C'$, then
\[
h^0 \big(\LL_{C'} (D''+(e-1,f))\big) = d'-n-f-(m-2)(n-1)+1,
\]
so
\[
h^0 \big(\LL_C (D')\big) = d'-n-(m-2)(n-1)+2 = d'-(m-1)(n-1)+1,
\]
and $D'$ is non-special on $C$.

Otherwise, by induction we have
\[
d'-n-f = \deg \big(D''+(e-1,f)\big) \geq \min_{(a,b,h) \in I_{r-f-1}} a(m-1) + bn - h .
\]
In other words, there exist constants $a,b$, and $h$ such that
\[
d' \geq a(m-1) + bn - h+n+f
\]
and
\[
(a+1)(b+1) - h = r-f.
\]
Note that $a \geq f$, hence $h+a-f \geq 0$.  Let $a'=a$, $b'=b+1$, and $h'=h+a-f$.  Then
\[
d' \geq a'm+b'n-h'
\]
and
\[
(a'+1)(b'+1) - h' = (a+1)(b+1)-h + (f+1) = r+1.
\]
Hence
\[
d' \geq \min_{(a,b,h) \in I_r} am + bn - h .
\]

Finally, notice that in both cases our choice for $a'$ and $b'$ do not decrease, so the inequalities $a \geq y, b \geq x$ follow.
\end{proof}


\begin{thebibliography}{99}
\bibitem{Bak} M. Baker, \emph{Specialization of Linear Systems From Curves to Graphs. With an appendix by B.
Conrad.}, Algebra Number Theory 2 (2008), no. 6, 613-653.
\bibitem{BJ} M. Baker and D. Jensen, \emph{Degeneration of Linear Series from the Tropical Point of View and Applications}, Baker M., Payne S. (eds) Nonarchimedean and Tropical Geometry. Simons Symposia. Springer, Cham.
\bibitem{BaNo} M. Baker and S. Norine, \emph{Riemann-Roch and Abel-Jacobi theory on a finite graph}, Adv. Math. 215 (2007), 766-788.
\bibitem{CaCo} W. Castryck and F. Cools, \emph{Linear pencils encoded in the Newton polygon}, Int. Math. Res. Not. 10 (2017), 2998–3049.
\bibitem{Cil} C. Ciliberto, \emph{Alcune applicazioni di un classico procedimento di Castelnuovo}, Sem. di Geom., Dipart. di Matem., Univ. di Bologna (1982-1983), 17–43.
\bibitem{CDPR} F. Cools, J. Draisma, S. Payne and E. Robeva, \emph{A tropical proof of the Brill-Noether Theorem}, Adv. Math. 230 (2012), 759-776.
\bibitem{CP} F. Cools and M. Panizzut, \emph{The gonality sequence of complete graphs}, arXiv:1605.03749.
\bibitem{CLS} D. A. Cox, J. B. Little, H. K. Schenck, \emph{Toric Varieties}, Graduate Studies in Mathematics, Volume 124, American Mathematical Society (2011).
\bibitem{DLB} M. D'Adderio and Y. Le Borgne, \emph{The sandpile model on $K_{m,n}$ and the rank of its configurations}, arXiv:1608.01521.
\bibitem{GK} A. Gathmann and M. Kerber, \emph{A Riemann–Roch theorem in tropical geometry}, Math. Z. 259 (2008), 217-230.
\bibitem{Har} R. Hartshorne, \emph{Generalized divisors on {G}orenstein curves and a theoren of {N}oether}, J. Math. Kyoto Univ. 26(3) (1986), 375--386.
\bibitem{HKN} J. Hladk\'y, D. Kr\'al and S. Norine, \emph{Rank of divisors on tropical curves}, Journal of Combinatorial Theory, Series A Volume 120, Issue 7 (2013), 1521--1538.
\bibitem{LM} H. Lange and G. Martens, \emph{On the gonality sequence of an algebraic curve},  Manuscripta Math. 137 (2012), no. 3-4, 457--473.
\bibitem{LN} H. Lange and P.E. Newstead, \emph{Clifford indices for vector bundles on curves}, Affine Flag Manifolds and Principal Bundles, Trends in Mathematics, 165--202.
\bibitem{Luo} Y. Luo, \emph{Rank-determining sets of metric graphs}, Journal of Combinatorial Theory, Series A Volume
118, Issue 6, (2011), 1775-1793.
\bibitem{MZ} G. Mikhalikn and I. Zharkov, \emph{Tropical curves, their {J}acobians and {T}heta functions}, Curves and Abelian Varieties, Contemporary Mathematics 465 (2008), 203--230.
\bibitem{Noe} M. Noether, \emph{Zur Grundlegung der Theorie der algebraischen Raumcurven}, Verl. d. Konig. Akad. d. Wiss., Berlin (1883).
\bibitem{Zar} O. Zariski, \emph{Introduction to the problem of minimal models in the theory of algebraic surfaces}, Publications of the Mathematical Society of Japan, no. 4, The Mathematical Society of Japan, Tokyo (1958).
\end{thebibliography}
\end{document}